\newcommand{\sub}{\subseteq}
\newcommand{\V}{{\mathfrak V}}
\newcommand{\Ni}{{\mathfrak N}}
\renewcommand{\U}{{\mathfrak U}}
\newcommand{\A}{{\mathfrak A}}
\newcommand{\D}{{\mathfrak D}}
\newcommand{\X}{{\mathfrak X}}
\newcommand{\Y}{{\mathfrak Y}}
\newcommand{\B}{{\mathfrak B}}
\newcommand{\N}{{\mathbb N}}
\newcommand{\Z}{{\mathbb Z}}
\newcommand{\F}{{\mathbb F}}
\newcommand{\1}{\{1\}}
\newcommand{\Aut}[1]{\mathrm{Aut}\left( #1 \right)}
\newcommand{\var}[1]{\mathrm{var}\left( #1 \right)}
\newcommand{\varr}[1]{\mathrm{var}\,  #1 }
\newcommand{\Wr}{\,\mathrm{Wr}\,}
\newcommand{\Wrr}{\,\mathrm{wr}\,}
\newcommand{\abs}[1]{\lvert#1\rvert}
\newtheorem{theorem}{Theorem}[section]
\newtheorem{lemma}[theorem]{Lemma}
\newtheorem{corollary}[theorem]{Corollary}  
\newtheorem{problem}[theorem]{Problem} 
\theoremstyle{definition}
\newtheorem{definition}[theorem]{Definition}
\newtheorem{example}[theorem]{Example}
\theoremstyle{remark}
\newtheorem{remark}[theorem]{Remark}
\numberwithin{equation}{section}
\begin{document}

{\parindent=0mm \footnotesize
Proc.~Conf.~Abelian Groups, Rings and Modules \\
Perth, Australia, 9-15, July, 2000\\
{\bf Contemprorary Mathematics, Amer. Math. Soc.}, 273, Providence, RI (2001), 223-238.\\ 
ISSN 0271--4132,\;
MR1817165,\;
Zbl 0981.20018,\;
\href{http://www.ams.org/books/conm/273/}{AMS books},\;  
DOI: dx.doi.org/10.1090/conm/273 
\vskip5mm
}
\vskip5mm

\title[On varieties of groups generated by wreath products]{On varieties of groups generated by 
wreath products of abelian groups}
\author{Vahagn H.~Mikaelian}
\email{v.mikaelian@gmail.com}

\renewcommand{\subjclassname}{2010 Mathematics Subject Classification}
\subjclass{20E22, 20E10, 20K01, 20K25}

\date{May 17, 2000.}

\dedicatory{To Marine Mikaelian on her birthday}

\begin{abstract}
Generalizing results of Higman and  Houghton on varieties generated by wreath products of finite cycles, we prove that the (direct or cartesian) wreath product of arbitrary abelian groups $A$ and $B$ generates the product variety $\var A \cdot \var B$ if and only if one of the groups $A$ and $B$ is not of finite exponent, or if  $A$ and $B$ are of finite exponents $m$ and $n$ respectively and for all primes $p$ dividing both $m$ and $n$, the factors $B[p^k]/B[p^{k-1}]$ are infinite, where $B[s]=\langle b\in B|\,b^{s}=1 \rangle$ and where $p^k$ is the highest power of $p$ dividing~$n$.
\end{abstract}

\maketitle

\section*{Introduction}

\noindent
The problem,  whether the standard wreath product $A \Wrr B$ of abelian groups  $A$ and $B$
 generates the product variety $ \var A \cdot \var B$, is solved by Higman for the case when
$A=C_p$ and $B=C_n$ are finite cycles of  orders $p$ and $n$, where $p$ is a
prime~\cite{Some_remarks_on_varieties}, and by Houghton for the case of arbitrary finite cycles
$A=C_m$ and $B=C_n$. Namely, equality $\var{C_m \Wrr C_n}= \var{C_m} \cdot \var{C_n} = \A_m
\cdot \A_n$ holds if and only if $m$ and $n$ are coprime. 
As we are informed by Professor C.~Houghton, his result never was published. However this theorem is frequently cited in the literature and can be found, say, in~\cite{HannaNeumann}: this is not only a well-known result of
independent interest, but also an argument frequently used in  other constructions of the theory of varieties of groups: descriptions of lattices of subvarieties of certain product varieties, basis ranks of varieties (see for example~\cite{HannaNeumann}).

The aim of this paper is to generalize Houghton's result for the case of {\it arbitrary} 
abelian groups $A$ and $B$. Namely, {\it for arbitrary abelian groups $A$ and $B$ the (direct or
cartesian) wreath product of groups $A$ and $B$ generates the product variety $\var A \cdot
\var B$ if and only if at least one of the groups $A$ and $B$ is not of finite exponent, or if 
$A$ and $B$ are of finite exponents $m$ and $n$ respectively and for all primes $p$ dividing
both $m$ and $n$, the factors $B[p^k]/B[p^{k-1}]$ are infinite, where $B[s]$ is defined as
$B[s]=\langle b\in B|\,b^{s}=1 \rangle$ and where $p^k$ is the highest power of $p$
dividing~$n$} (Theorem~\ref{MainCriterion}). As we will see below these factors
$B[p^k]/B[p^{k-1}]$ have very ``understandable'' structure and our criterion is easily
applicable in concrete situations. 
 
The structure of infinitely generated abelian groups of non-finite exponent is complicated and  at first sight such a generalization may demand techniques very different from those of critical groups, of Cross varieties or of finite nilpotent $p$-groups. The main idea that enables us to deal with the case of infinitely generated groups is the following main dichotomy: {\it each abelian group is either of finite exponent and, thus, is a direct sum of (possibly infinitely many) copies of some finitely many cycles of prime power orders, or is a discriminating group} (see definitions and notations below). The point  is that if the ``active'' group $B$ is a discriminating group, then the cartesian or direct wreath product of $A$
and $B$ always generates (and discriminates) the variety $\var A \cdot \var B$ and we can restrict ourselves to the first case of the dichotomy. In this case if $p$ is a prime dividing $n$, then the $p$-primary component $B_p$ of $B$ is simply a direct sum of some (possibly infinitely many) copies of cycles $C_p, C_{p^2}, \ldots ,C_{p^k}$, and our condition $|B[p^k]/B[p^{k-1}]|=\infty$ simply means that the direct decomposition of $B_p$ contains infinitely many summands isoporphic to the
cycle $C_{p^k}$.

Since the proof of Theorem~\ref{MainCriterion} consists of consideration of several cases and 
subcases, we have divided it into parts which occupy
Sections~\ref{The_cases_of_non-finite_exponent}--%
\ref{groups_of_finite_composite_exponents}
and each one of them is presented as an independent result 
(Theorems~\ref{InfiniteExpAorB},~\ref{FinitelyGenerated}, 
~\ref{p-groups},~\ref{ArbitraryFinitem,n} closing corresponding sections).

For arbitrary groups $A$ and $B$ the cartesian wreath product $A\Wr B$ and direct wreath product $A\Wrr B$ generate the very same variety of groups. We build our construction for the case of {\it cartesian}\, wreath products,  bearing in mind, that our proofs are also true for {\it direct} wreath products of groups. Only in a few cases do we consider direct wreath products for some specific details of the proofs. 

For general information on the theory of groups we refer 
to~\cite{Robinson,KargapolovMerzlyakov}. Following~\cite{HannaNeumann,OnTheStructureOfWreathProducts} we denote by $A \Wr B$ the {\it cartesian}\, wreath product of groups $A$ and $B$ and by $A \Wrr B$ the {\it direct}\, wreath product of these groups.  The {\it base groups}\, of the cartesian and direct wreath products of $A$ and $B$ will be denoted by  $A^B$ and by $A^{(B)}$ respectively. Detail information on
wreath products can be found in 
~\cite{HannaNeumann,MeldrumWr,OnTheStructureOfWreathProducts,KargapolovMerzlyakov}.
For general information on varieties of groups we refer to
the book of Hanna Neumann~\cite{HannaNeumann}. We reserve notations $\A$,  $\A_n$, $\Ni_c$ and $\B_e$ for varieties of all abelian groups, of all abelian  groups of exponent dividing $n$, of all nilpotent groups of class at most $c$, and of all groups of exponents dividing $e$ respectively. For a  set $\X$ of groups we denote by $\varr{\X}$, as usual, the variety {\it generated} by $\X$. Information on the notion of {\it discriminating group} can be found in~\cite{B+3N,HannaNeumann}. See also the articles of Bryce~\cite{Bryce_Metabelian_groups,Bryce_Variaties_Metabelian_p-groups} and of Kov\'acs and Newman~\cite{KovacsAndNewmanOnInfiniteGroups} for results of more general nature related to the material of this paper. 
We write abelian groups {\it additively}, all other groups will be written {\it multiplicatively}. Background information on abelian groups used in this paper can be found in~\cite{Fuchs_Infinite_groups,Robinson,KargapolovMerzlyakov}.

I am extremely grateful to Professor Alexander Yurievich Ol'shanskii, who introduced me to varieties of groups and guided and encouraged me in all parts of my work during my post-graduate study at the Lomonosov  Moscow State University, where most of this investigation was done.

\section{Wreath products and operations ${\sf Q,S,C}$}  
\label{QSC}

\noindent
As usual, for a given set $\X$ of groups we denote by  ${\sf Q}\X$, ${\sf S}\X$ and  ${\sf C}\X$, 
the sets of all homomorphic images, subgroups and cartesian products of groups of $\X$
respectively. According to Birkhoff's Theorem~\cite{BirkhoffQSC,HannaNeumann}, for the given
set $\X$ of groups the variety  $\varr{\X}$ can be realized as: $\varr{\X}={\sf QSC}\,\X$.

For given $\X$ and $\Y$ denote $\X \Wr \Y = \{ X\Wr Y \,|\, X\in \X, Y\in \Y\}$ and  $\X \Wrr \Y = \{ X\Wrr Y \,|\, X\in \X, Y\in \Y\}$. Since the product variety $\varr{\X} \cdot \varr{\Y}$ consists of all extensions of groups $X^*\in \varr{\X}$ by groups $Y^*\in \varr{\Y}$
and, since by Kaloujnine and Krasner Theorem~\cite{KaloujnineKrasner} each extension of such a type can be embedded into the appropriate wreath product $X^* \Wr Y^*$, we get that the set $\X \Wr \Y $ generates the variety $\varr{\X}\cdot \varr{\Y}$ if and only if for each pair $X^* \in
{\sf QSC}\,{\X}$ and $Y^* \in {\sf QSC}{\Y}$  variety $\var{\X \Wr \Y}$ contains $X^* \Wr Y^*$

The following lemmas, however, show that, to see whether $\var{\X \Wr \Y}=\varr{\X}\cdot 
\varr{\Y}$, for purposes of the current paper we have to check just {\it one} of six conditions
assumed, namely, whether for abelian sets $\X$ and $\Y$ of groups the variety $\var{\X \Wr \Y}$
contains wreath products $X \Wr Y^*$ for every $X \in \X$ and $Y^* \in {\sf C}\Y$.

\begin{lemma}
\label{X*WrY_belongs_var}
For arbitrary sets $\X$ and $\Y$ of groups and arbitrary groups $X^*$ and $Y$, where $X^*\in 
{\sf Q}\X$, $X^*\in {\sf S}\X$ or $X^*\in {\sf C}\X$ and where $Y\in \Y$, the group $X^* \Wr
Y$  belongs to variety $\var{\X \Wr \Y}$.
\end{lemma}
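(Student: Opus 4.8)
The plan is to handle the three cases $X^* \in {\sf Q}\X$, $X^* \in {\sf S}\X$, $X^* \in {\sf C}\X$ separately, each time producing $X^* \Wr Y$ as a member of ${\sf QSC}$ applied to the single group $X \Wr Y$ (or a cartesian power of such groups), so that Birkhoff's Theorem gives membership in $\var{\X \Wr \Y}$. The underlying mechanism in all three cases is the well-known functoriality of the wreath construction in its first (passive) argument: a homomorphism, inclusion, or product decomposition of the top of the base group lifts to the corresponding relationship between the whole wreath products, with the active group $Y$ held fixed.

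First I would treat $X^* \in {\sf S}\X$, which is the cleanest: if $X^* \leq X$ for some $X \in \X$, then the base group $(X^*)^Y$ embeds into $X^Y$ coordinatewise, this embedding is $Y$-equivariant, and hence it extends to an embedding $X^* \Wr Y \hookrightarrow X \Wr Y$ that is the identity on the top group $Y$. Thus $X^* \Wr Y \in {\sf S}\{X \Wr Y\} \subseteq \var{\X \Wr \Y}$. Next, for $X^* \in {\sf Q}\X$, write $X^* \cong X/N$ with $X \in \X$ and $N \normal X$. The map $X^Y \to (X/N)^Y = (X^*)^Y$ applied coordinatewise has kernel $N^Y$, it commutes with the $Y$-action, and so it induces a surjection $X \Wr Y \twoheadrightarrow X^* \Wr Y$ (with kernel $N^Y$ inside the base group). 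Hence $X^* \Wr Y \in {\sf Q}\{X \Wr Y\} \subseteq \var{\X \Wr \Y}$.

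For $X^* \in {\sf C}\X$, say $X^* = \prod_{i \in I} X_i$ with each $X_i \in \X$, I would show $X^* \Wr Y$ embeds into the cartesian product $\prod_{i \in I}(X_i \Wr Y)$. The base group of the left side is $(X^*)^Y = \bigl(\prod_i X_i\bigr)^Y \cong \prod_i (X_i^Y)$, which is precisely the base-group part of $\prod_i (X_i \Wr Y)$; under this identification the diagonal copy of $Y$ in $\prod_i(X_i \Wr Y)$ acts on $\prod_i(X_i^Y)$ exactly as $Y$ acts on $(X^*)^Y$, so $X^* \Wr Y$ is recovered as the subgroup of $\prod_i(X_i \Wr Y)$ generated by this $\prod_i(X_i^Y)$ together with the diagonal $Y$. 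Therefore $X^* \Wr Y \in {\sf SC}\{X \Wr Y : X \in \X\} \subseteq \var{\X \Wr \Y}$, and similarly in the two previous cases the resulting group lies in ${\sf QSC}\,(\X \Wr \Y)= \var{\X \Wr \Y}$.

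The only mild subtlety — and the step I would write out most carefully — is the bookkeeping of the $Y$-actions: in each case one must check that the coordinatewise map on base groups intertwines the shift action of $Y$, so that it genuinely extends to (or descends to) a homomorphism of the semidirect products rather than merely a map of the base groups. Once that equivariance is recorded, the three cases assemble immediately and Birkhoff's Theorem closes the argument; note also that it suffices to prove the claim for the cartesian wreath product since, as remarked in the introduction, $A \Wr B$ and $A \Wrr B$ generate the same variety.
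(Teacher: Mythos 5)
Your proposal is correct and follows essentially the same route as the paper: the quotient and subgroup cases via the coordinatewise map on base groups (with the $Y$-equivariance check), and the cartesian case via the embedding of $X^* \Wr Y$ into $\prod_{i\in I}(X_i \Wr Y)$ along the diagonal copy of $Y$. The paper's proof is just a terser version of the same argument, citing 22.11--22.12 of Hanna Neumann's book for the first two cases.
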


\begin{proof}
If $X^*$ is the homomorphic image of some $X\in \X$ under a homomorphism $f$, then for arbitrary 
$Y\in \Y$ the group  $X^* \Wr Y$ is the homomorphic image of the group $X \Wr Y$ under the
homomorphism  $f_W$ defined as: $f_W\!\!:y\varphi \mapsto y\varphi_f$, where $y\in Y$, $\varphi \in
X^Y$ and where $\varphi_f \in (X^*)^Y$ is set as: $\varphi_f(g)=f(\varphi (g))$ for each $g\in
Y$~\cite[22.11]{HannaNeumann}.

If $X^*$ is the subgroup of some $X\in \X$, then, clearly, $X^* \Wr Y$ is the subgroup of $X 
\Wr Y$~\cite[22.12]{HannaNeumann}.

If $X^*$ is cartesian product of some groups $X_i\in \X$, $i\in I$, then we can define an 
embedding of $X^* \Wr Y$ into the cartesian product $W^*=\prod_{i\in I}(X_i \Wr Y)$ by the
following rule:
$
	y \varphi^* \mapsto \theta \in W^*,
$
where $y\in Y$, $\varphi^* \in (X^*)^Y$ and $\theta$ it defined as $\theta(i)=y \varphi_i$ with 
$\varphi_i \in X_i^Y$, $\varphi_i (g)=[\varphi (g)](i)$, $g\in Y$, $i\in I$.
\end{proof}

\begin{lemma}
\label{X*WrY_belongs_var}
For arbitrary sets $\X$ and $\Y$ of groups and arbitrary groups $X$ and $Y^*$, where $X\in\X$ and 
where $Y^*\in {\sf S}\Y$, the group $X \Wr Y^*$  belongs to variety $\var{\X \Wr \Y}$.
Moreover, if  $\X$ is a set of abelian groups, then for each  $Y^*\in {\sf Q}\Y$ the group $X
\Wr Y^*$  also belongs to $\var{\X \Wr \Y}$.
\end{lemma}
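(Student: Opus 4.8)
**Proof proposal for Lemma \ref{X*WrY_belongs_var} (second one).**

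The plan is to treat the two assertions separately, since the subgroup case is elementary and the homomorphic-image case is where the abelian hypothesis does real work. First I would dispose of the case $Y^*\in{\sf S}\Y$: if $Y^*$ is a subgroup of some $Y\in\Y$, then $X\Wr Y^*$ embeds naturally into $X\Wr Y$ as the set of those functions supported on $Y^*$ together with the translations by elements of $Y^*$; this is the standard fact cited as~\cite[22.12]{HannaNeumann}, so $X\Wr Y^*\in{\sf S}(\X\Wr\Y)\sub\var{\X\Wr\Y}$ with nothing assumed about $\X$.

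The substantive part is the ``moreover'' clause. Suppose $X$ is abelian and $Y^*$ is a homomorphic image of some $Y\in\Y$, say $Y^*=Y/N$ under the canonical projection $\pi\colon Y\to Y/N$. The obvious attempt — to realize $X\Wr Y^*$ directly as a quotient of $X\Wr Y$ — fails, because collapsing $N$ in the top group does not collapse the correspondingly ``repeated'' coordinates of the base group; the kernel one wants is not normal-friendly in the way needed. The key idea is instead to go through a \emph{cartesian power}: since $X$ is abelian, $X^{(N)}$ (or $X^N$) is abelian and the base group $X^{Y}$ of $X\Wr Y$, viewed via the coset decomposition $Y=\bigsqcup_{\bar y\in Y^*} yN$, is isomorphic to $(X^{N})^{Y^*}$ — that is, $X\Wr Y$ is isomorphic to $(X^N)\Wr Y^*$ \emph{provided} one uses that each fibre contributes a copy of the abelian group $X^N$ on which $Y^*$ acts by permuting the $Y^*$-coordinates. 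Concretely one checks that the map sending $y\varphi\in X\Wr Y$ to the element of $(X^N)\Wr Y^*$ whose top component is $\pi(y)$ and whose base component at coset $\bar z$ is the function $n\mapsto \varphi(z_0 n)$ (for a fixed transversal element $z_0$ of $\bar z$) is a well-defined isomorphism; abelianness of $X$ is exactly what makes the twisting by the transversal choice immaterial, i.e. makes this a homomorphism.

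Having identified $X\Wr Y\cong (X^N)\Wr Y^*$, I would then apply the \emph{first} Lemma~\ref{X*WrY_belongs_var} (the one already proved): $X$ is a subgroup — indeed a homomorphic image and a cartesian factor — of $X^N$, hence $X\in{\sf S}\{X^N\}\cap{\sf C}\{X\}$ and the argument of that lemma shows $X\Wr Y^*\in\var{\{X^N\Wr Y^*\}}$. Since $X^N\Wr Y^*$ is a cartesian power of $X\Wr Y$ composed with the subgroup/quotient operations already handled, and $X\Wr Y\in\X\Wr\Y$, we conclude $X\Wr Y^*\in{\sf QSC}(\X\Wr\Y)=\var{\X\Wr\Y}$. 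The main obstacle, as indicated, is verifying that the fibrewise identification $X\Wr Y\cong(X^N)\Wr Y^*$ is genuinely an isomorphism of groups and not merely of sets: one must track how multiplication in $X\Wr Y$ — where the $Y$-action shifts arguments of base functions — corresponds to the $Y^*$-action shifting $Y^*$-coordinates of $(X^N)$-valued functions, and this is precisely the point where commutativity of $X$ is used (and cannot be dropped).
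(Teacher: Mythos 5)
Your treatment of the case $Y^*\in{\sf S}\Y$ is fine and matches the standard argument. The ``moreover'' part, however, rests on a claim that is false: $X\Wr Y$ is \emph{not} isomorphic to $(X^N)\Wr Y^*$. A cardinality count already rules this out: for $X=C_2$, $Y=C_4$, $N\cong C_2$, $Y^*\cong C_2$ one has $\abs{C_2\Wr C_4}=2^4\cdot 4=64$, while $\abs{(C_2\times C_2)\Wr C_2}=(2^2)^2\cdot 2=32$. The identification of the base groups $X^{Y}\cong(X^{N})^{Y^*}$ along the coset decomposition is correct, but the top group of $X\Wr Y$ is $Y$, not $Y^*$, and the elements of $N$ act nontrivially on the base group (they permute the coordinates \emph{inside} each fibre), so their action cannot be pushed down to $Y^*$. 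What the fibrewise decomposition actually yields is the classical embedding $X\Wr Y\le(X\Wr N)\Wr(Y/N)$, which points in the wrong direction for your purpose and in any case would not place $X\Wr Y^*$ in ${\sf QSC}(\X\Wr\Y)$. Since the whole second half of your argument applies the first lemma to $(X^N)\Wr Y^*$ as if it were a ${\sf QSC}$-avatar of $X\Wr Y$, the proof does not go through.

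The direct quotient approach you dismiss at the outset is in fact the one that works, and it is the paper's proof: pass to the \emph{direct} wreath product $X\Wrr Y$ (it generates the same variety as $X\Wr Y$) and map it onto $X\Wrr Y^*$, where $Y^*=h(Y)$, by $y\mapsto h(y)$ on the top group and by ``summation over fibres'' on the base group, $\varphi\mapsto\bar\varphi$ with $\bar\varphi(\bar y)=\prod_{z\in h^{-1}(\bar y)}\varphi(z)$; the product is finite because $\varphi$ has finite support, and the map is well defined and multiplicative exactly because $X$ is abelian. On the generators this is the assignment $\varphi_{x,y}\mapsto\varphi_{x,h(y)}$, which respects all relations of $X^{(Y)}\rtimes Y$ precisely when elements of $X$ sitting over a common fibre commute. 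Your intuition that commutativity of $X$ is the crux is right, but it enters through the well-definedness of this fibre-sum quotient, not through any isomorphism $X\Wr Y\cong(X^N)\Wr Y^*$.
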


\begin{proof}
The first statement of the lemma is obvious (see~\cite[22.13]{HannaNeumann}).

Since the cartesian and direct wreath products $H\Wr G$ and $H\Wrr G$ of arbitrary groups $H$ and 
$G$ generate the same variety~\cite[22.31, 22.32]{HannaNeumann}, it is sufficient to show that,
if $Y^*$ is a homomorphic image of some $Y\in \Y$ under some homomorphism $h$, then the direct
wreath product $X\Wrr Y^*$ is the homomorphic image of $X\Wrr Y$ under some homomorphism $h_W$,
provided that, $X$ is abelian. $h_W$ is defined by its values $h_W(y)$ and $h_W(\varphi_{x,y})$
over the following set of elements generating $X\Wrr Y$:
$$
	\{ y\in Y, \,\varphi_{x,y}\in X^{(Y)} \,|\,
	\varphi_{x,y}(y)=x \,\,\, {\rm and} \,\,\, 
	\varphi_{x,y}(g)=1 \,\,\, {\rm for} \,\,\, 
	g\in Y \backslash \{ y \},\,\, x\in X \}.
$$
Namely:
$$
	h_W\!\!:y \mapsto h(y) \quad {\rm and} \quad
	h_W\!\!:\varphi_{x,y}\mapsto \varphi_{x,h(y)}\in X^{(Y^*)}.
$$
See also~\cite{ShmelkinOnCrossVarieties,BrumbergOnWreathProducts}.
\end{proof}

In particular, if each of $\X$ and $\Y$ consist of one group only, it follows from the previous two lemmas that:

\begin{lemma}
\label{karevor}
For arbitrary groups $A$ and $B$, if $A^*\cong A/N$ ($N$ is any normal subgroup of $A$), $A^* 
\le A$ or $A^*=\prod_{i\in I}A$ ($I$ is any index set), then $A^*\Wr B \in \var{A\Wr B}$.

On the other hand, if $B^*\cong B/K$ ($A$ is abelian and $K$ is any normal subgroup of $B$) or if $B^* \le B$, then $A\Wr B^* \in \var{A\Wr B}$.
\end{lemma}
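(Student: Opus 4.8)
The plan is to read off Lemma~\ref{karevor} as the one-group case of the two preceding lemmas, simply by unwinding the operator notation ${\sf Q},{\sf S},{\sf C}$. I would take $\X=\{A\}$ and $\Y=\{B\}$; then $\var{\X\Wr\Y}=\var{\{A\Wr B\}}=\var{A\Wr B}$, and for the single group $A$ the classes ${\sf Q}\X$, ${\sf S}\X$, ${\sf C}\X$ are, respectively, all quotients $A/N$, all subgroups $A^*\le A$, and all cartesian powers $\prod_{i\in I}A$ (a cartesian product of groups of the singleton class $\{A\}$ being just a cartesian power of $A$). The analogous descriptions hold for $\Y=\{B\}$.

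For the first assertion, each of the three hypotheses on $A^*$ — that $A^*\cong A/N$, that $A^*\le A$, or that $A^*=\prod_{i\in I}A$ — places $A^*$ into one of ${\sf Q}\{A\}$, ${\sf S}\{A\}$, ${\sf C}\{A\}$. Applying the first of the two preceding lemmas (the one on $X^*\Wr Y$) with $X^*=A^*$ and $Y=B\in\Y$ then gives $A^*\Wr B\in\var{\{A\}\Wr\{B\}}=\var{A\Wr B}$ in all three cases at once.

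For the second assertion, I would invoke the second of the two preceding lemmas (the one on $X\Wr Y^*$) with $X=A\in\X$ and $\Y=\{B\}$. Its first half yields $A\Wr Y^*\in\var{A\Wr B}$ whenever $Y^*\in{\sf S}\{B\}$, i.e. whenever $Y^*\le B$. Its ``moreover'' clause requires $\X$ to consist of abelian groups, which for the singleton $\X=\{A\}$ means precisely that $A$ is abelian; under that hypothesis it gives $A\Wr Y^*\in\var{A\Wr B}$ whenever $Y^*\in{\sf Q}\{B\}$, i.e. whenever $Y^*\cong B/K$. Together these cover exactly the two cases listed in the statement.

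There is essentially no obstacle here: the only care needed is the bookkeeping dictionary between the operators ${\sf Q},{\sf S},{\sf C}$ applied to the singletons $\{A\}$, $\{B\}$ and the concrete group-theoretic descriptions appearing in Lemma~\ref{karevor}, together with the trivial identity $\var{\{G\}}=\var{G}$. No new homomorphisms or embeddings have to be produced, since the requisite constructions were already supplied in the proofs of the two lemmas above.
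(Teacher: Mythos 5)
Your proposal is correct and coincides with the paper's own treatment: the paper states Lemma~\ref{karevor} precisely as the specialization of the two preceding lemmas to the singleton sets $\X=\{A\}$, $\Y=\{B\}$, which is exactly the unwinding of ${\sf Q},{\sf S},{\sf C}$ that you carry out. No further argument is needed.
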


\begin{corollary}
If $A$ and $B$ are abelian groups, then $\var{A\Wr B}=\var{A}\cdot\var{B}$ if and only if 
$\var{A\Wr B}$ contains the wreath product $A\Wr (\prod_{i\in I}B)$ for every index set $I$.
\end{corollary}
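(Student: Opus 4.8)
The forward direction is trivial: if $\var{A\Wr B} = \var A \cdot \var B$, then since each cartesian power $\prod_{i\in I}B$ lies in $\var B$, the wreath product $A\Wr(\prod_{i\in I}B)$ lies in $\var A\cdot\var B = \var{A\Wr B}$ by the Kaloujnine–Krasner embedding argument recalled in Section~\ref{QSC} (an extension of an abelian group in $\var A$ by a group in $\var B$ embeds in the corresponding wreath product, which in turn lies in the product variety). So the substance is the converse.

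For the converse, assume $\var{A\Wr B}$ contains $A\Wr(\prod_{i\in I}B)$ for every index set $I$. Since always $\var{A\Wr B}\sub \var A\cdot\var B$, it suffices to prove the reverse inclusion, and by the discussion after Lemma~\ref{X*WrY_belongs_var} (second one) this reduces to showing that $\var{A\Wr B}$ contains $X^*\Wr Y^*$ for all $X^*\in\mathsf{QSC}\{A\}$ and $Y^*\in\mathsf{QSC}\{B\}$. The plan is to peel these operators off one at a time using the lemmas already proved. First, apply Lemma~\ref{karevor}: for $X^*$ obtained from $A$ by $\mathsf Q$, $\mathsf S$ or $\mathsf C$ we have $X^*\Wr B\in\var{A\Wr B}$; iterating (a quotient of a subgroup of a cartesian power, etc.) and using that $\mathsf{QSC}$ is idempotent on a single group up to the standard reordering $\mathsf{SC}\sub\mathsf{CS}$, $\mathsf{QC}\sub\mathsf{CQ}$, $\mathsf{QS}\sub\mathsf{SQ}$ (Birkhoff), every $X^*\in\var A$ satisfies $X^*\Wr B\in\var{A\Wr B}$.

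Next I would handle the ``active'' coordinate. Given $Y^*\in\var B=\mathsf{QSC}\{B\}$, write $Y^*$ as a quotient of a subgroup $H$ of a cartesian power $B^J=\prod_{j\in J}B$. By hypothesis $A\Wr B^J\in\var{A\Wr B}$; by the first part of Lemma~\ref{X*WrY_belongs_var} (second Lemma~\ref{X*WrY_belongs_var}, the ``$\mathsf S$'' clause) passing to the subgroup $H\le B^J$ gives $A\Wr H\in\var{A\Wr H}\sub\var{A\Wr B^J}\sub\var{A\Wr B}$; and by the ``moreover'' clause of that same lemma, since $A$ is abelian, passing to the quotient $Y^*=H/K$ gives $A\Wr Y^*\in\var{A\Wr H}\sub\var{A\Wr B}$. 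Combining the two coordinates: for $X^*\in\var A$ and $Y^*\in\var B$, first get $X^*\Wr B\in\var{A\Wr B}$, then replace $B$ by $Y^*$ in exactly the same way (the argument of the second Lemma~\ref{X*WrY_belongs_var} only used that the ``passive'' group is abelian, which $X^*$ is, being a member of the abelian variety $\var A$), obtaining $X^*\Wr Y^*\in\var{X^*\Wr B}\sub\var{A\Wr B}$. Hence $\mathsf{QSC}\{A\}\Wr\mathsf{QSC}\{B\}\sub\var{A\Wr B}$, which by the Kaloujnine–Krasner remark of Section~\ref{QSC} yields $\var A\cdot\var B\sub\var{A\Wr B}$, completing the proof.

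The only delicate point is bookkeeping: making sure the operator-reordering inclusions are applied in the right order so that ``$X^*\in\mathsf{QSC}\{A\}$'' really is reached from $A$ by a $\mathsf C$, then $\mathsf S$, then $\mathsf Q$ step (matching the order in which Lemma~\ref{karevor} lets us act), and likewise that the abelianness hypothesis is available exactly where the ``moreover'' clause needs it — here it is automatic since both $A$ and $B$, hence all groups in $\var A$ and $\var B$, are abelian. No genuinely new idea is needed; the corollary is a direct packaging of Lemmas~\ref{X*WrY_belongs_var}, \ref{X*WrY_belongs_var} and \ref{karevor} together with Birkhoff's $\var\X=\mathsf{QSC}\,\X$.
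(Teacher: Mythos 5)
Your argument is correct in substance and takes the same route the paper intends: the corollary is just the observation that the lemmas of Section~\ref{QSC} dispose of the operators $\mathsf Q,\mathsf S,\mathsf C$ on the passive coordinate and $\mathsf Q,\mathsf S$ on the active coordinate for free, so that after the Kaloujnine--Krasner/Birkhoff reduction only the cartesian powers of the active group remain to be checked. One step in your final combination is, however, stated in the wrong order: you assert $X^*\Wr Y^*\in\var{X^*\Wr B}$ by ``replacing $B$ by $Y^*$ in exactly the same way,'' but the cartesian-power step of that replacement would need $X^*\Wr (\prod_{j\in J}B)\in\var{X^*\Wr B}$, which is not what the hypothesis provides (it concerns $A$, not an arbitrary $X^*\in\var{A}$). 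The repair is immediate and stays inside your own framework: handle the active coordinate first, obtaining $A\Wr Y^*\in\var{A\Wr B}$ from the hypothesis together with the $\mathsf S$- and $\mathsf Q$-clauses, and then apply Lemma~\ref{karevor} to the pair $(A,Y^*)$ to conclude $X^*\Wr Y^*\in\var{A\Wr Y^*}\sub\var{A\Wr B}$; alternatively, note $X^*\Wr (\prod_{j\in J}B)\in\var{A\Wr (\prod_{j\in J}B)}\sub\var{A\Wr B}$ by Lemma~\ref{karevor} applied to the pair $(A,\prod_{j\in J}B)$ and then descend to subgroups and quotients. With that reordering your proof is complete and coincides with the paper's.
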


\begin{remark}
As we will see in Section~\ref{last}, an even stronger result of independent interest can be proved: $\var{A\Wr B}=\var{A}\cdot\var{B}$ holds for abelian groups $A$ and $B$ if and only if $\var{A\Wr B}$ contains the wreath product $A\Wr(B\oplus B)$ of $A$ and of the direct sum of {\it two copies} of $B$ (see Theorem~\ref{AnEvenStrongerResult}).
\end{remark}

\section{Discriminating sets of groups\\ and the case of abelian groups of non-finite exponents}  
\label{The_cases_of_non-finite_exponent}

\noindent
Let us begin by considering the case of wreath products of abelian groups $A$ and $B$, 
where at least one of these groups is {\it not} of finite exponent. As we will see, this
situation can be described via properties of discriminating sets of groups.

\begin{definition}[see~\cite{B+3N}]
The set $\D$ of groups is said to be {\it discriminating}, if for arbitrary finite word set $V$ with the property that, for each $w\in V$ there exists a homomorphism $\delta_w$ of a free group
$F_n$ into some group of $\D$, such that $\delta_w(w)\not= 1$, there exist a group $D\in \D$ and a  single homomorphism $\delta$ of $F_n$ into $D$, such that $\delta(w)\not= 1$ for all $w\in V$.
\end{definition}

A discriminating set of groups $\D$ can be described by the following property: every finite set of identities $\{w\equiv 1 \,|\, w\in V \}$ that can be separately falsified in some groups
$\{D_w \in \D\,|\, w\in V\}$ can also be {\it simultaneously} falsified in a group $D=D_V \in
\D$ for certain choice of values $d_1,d_2,\ldots,d_n\in D$. Every discriminating set $\D$ {\it
discriminates} the variety $\varr \D$ generated by $\D$, that is, $\D \sub \varr \D$ and for
every finite set $V$ of words in, say, $n$ variables, none of which is identically $1$ in $\varr
\D$ there is a group $D\in \D$  and elements $d_1,d_2,\ldots,d_n\in D$ such that for all $w\in
V$ $w(d_1,d_2,\ldots,d_n)\not=1$ holds~\cite{B+3N}. Discriminating set $\D$ always generates
$\varr \D$. If a discriminating set consists of one group $D$ we term {\it discriminating
group} $D$. 

\begin{lemma}[see~\cite{B+3N}]
\label{D<QS(D_1)}
If $\D$ discriminates the variety $\U = \varr{\D}$ and for the given set $\D_1$ the relations $\D \sub {\sf Q S} \D_1$ and $\D_1\sub \U$ hold, then $\D_1$ also discriminates $\U$. 
\end{lemma}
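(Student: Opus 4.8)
The goal is to show that $\D_1$ discriminates $\U=\varr{\D}$. We already know $\D_1\sub\U$ by hypothesis, so the content is the separating property: given a finite word set $V$ in $n$ variables, none of whose members is a law of $\U$, we must produce a single group $D_1\in\D_1$ and values $d_1,\ldots,d_n\in D_1$ falsifying every $w\in V$ simultaneously. The natural route is to transport the discrimination we have in $\D$ along the relations $\D\sub{\sf Q S}\D_1$. First I would apply the discrimination of $\U$ by $\D$: since no $w\in V$ is a law of $\U=\varr{\D}$, there is a group $D\in\D$ and elements $e_1,\ldots,e_n\in D$ with $w(e_1,\ldots,e_n)\ne1$ for all $w\in V$.

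Next I would unwind $D\in{\sf QS}\D_1$. This means there is a group $D_1\in\D_1$, a subgroup $H\le D_1$, and a surjective homomorphism $\pi\colon H\twoheadrightarrow D$. Pull the witnessing elements back: choose $h_1,\ldots,h_n\in H$ with $\pi(h_i)=e_i$. Since $w(h_1,\ldots,h_n)$ maps under $\pi$ to $w(e_1,\ldots,e_n)\ne1$, we get $w(h_1,\ldots,h_n)\ne1$ in $H$, hence in $D_1$, for every $w\in V$. Thus the single group $D_1\in\D_1$ together with the tuple $(h_1,\ldots,h_n)$ falsifies all of $V$ at once, which is exactly the discrimination condition. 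So $\D_1$ discriminates $\U$, as claimed.

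One point to keep clean is the bookkeeping between the two equivalent formulations of discrimination given in the text — the one phrased via homomorphisms $\delta_w\colon F_n\to D$ with $\delta_w(w)\ne1$, and the one phrased via a tuple of elements $d_1,\ldots,d_n$. A homomorphism $F_n\to G$ is the same data as an $n$-tuple of elements of $G$ (the images of the free generators), and $\delta(w)\ne1$ is the same as $w(d_1,\ldots,d_n)\ne1$; I would just note this dictionary once and then work with tuples throughout, since the pullback argument is transparent in that language. The hypothesis that $\D$ \emph{discriminates} $\U$ (not merely generates it) is what licenses the first step; the extra generating hypotheses about $\D_1$ in the statement are only used to guarantee $\D_1\sub\U$, which we are handed directly.

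The argument is essentially a one-line diagram chase, so there is no serious obstacle; the only thing to be careful about is the direction of the operators. Because $D\in{\sf QS}\D_1$ we first pass to a subgroup of a member of $\D_1$ and then to a quotient of it, so the pullback is: lift the witness tuple through the quotient map, and then observe it already lives in the ambient $D_1$ (subgroups need no further work since non-triviality is inherited upward). If one instead had $\D\sub{\sf SQ}\D_1$ the same idea works but one would lift through the quotient and then note the subgroup inclusion; for the stated ${\sf QS}$ order the version above is the direct one. No quantitative estimates or case analysis are needed, and the number of variables $n$ is fixed throughout, so there is nothing delicate about uniformity.
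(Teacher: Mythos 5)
Your argument is correct: lifting the witnessing tuple through the surjection $H\twoheadrightarrow D$ and then viewing it inside the ambient group $D_1\in\D_1$ is exactly the standard proof of this lemma (the paper itself gives no proof, citing only~\cite{B+3N}, where the same pullback argument appears). The one point you rightly flag --- that ${\sf QS}$ means quotient-of-subgroup, so one lifts through the quotient first and then uses that non-triviality passes up from a subgroup --- is handled correctly.
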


Now we can prove the following:

\begin{lemma}
\label{InfiniteExpB}
If the group $B$ is not of finite exponent, then the cartesian wreath product $A\Wr B$ generates 
the variety $\var{A}\cdot \var{B} = \var{A}\cdot \A$. 
\end{lemma}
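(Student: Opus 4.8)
The plan is to show that $A \Wr B$ discriminates the variety $\A \cdot \var A$ by reducing to a situation where the ``active'' group $B$ contains a suitable discriminating subgroup or quotient, and then invoking Lemma~\ref{karevor} together with Lemma~\ref{D<QS(D_1)}. First I would observe that, since $B$ is abelian of non-finite exponent, its structure theory gives one of two cases: either $B$ has an element of infinite order, or $B$ is a torsion group whose primary components are not all of bounded exponent, so that for some prime $p$ the $p$-component $B_p$ has elements of arbitrarily large order $p^k$. In the first case $B$ has a subgroup isomorphic to $\Z$; in the second case, for infinitely many $k$ the group $B$ has a subgroup isomorphic to $C_{p^k}$, and hence (passing to a suitable subgroup and then a quotient, using Lemma~\ref{karevor}) it suffices to treat $B' = \bigoplus_{k} C_{p^{n_k}}$ for an increasing sequence $n_k$, or even better the Pr\"ufer group $C_{p^\infty}$ as a quotient. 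The key point, already flagged in the Introduction, is the dichotomy: an abelian group not of finite exponent is a \emph{discriminating} group (this is the content of the results of B.\ H.\ Neumann et al.\ cited as~\cite{B+3N}, applied to abelian groups; $\Z$, the $C_{p^\infty}$'s, and the relevant infinite direct sums all discriminate $\A$).

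Granting that $B$ (or a subgroup/quotient we may legitimately pass to) discriminates $\A = \var B$, the main step is to show that $A \Wr B$ then discriminates $\A \cdot \var A$. For this I would take any finite word set $V = \{w_1, \dots, w_r\}$ in variables $x_1, \dots, x_n$, none identically $1$ in $\A \cdot \var A$, and produce a single substitution into $A \Wr B$ falsifying all of them simultaneously. Because $\A \cdot \var A$ is the variety of all extensions of a group in $\var A$ by an abelian group, each $w_j$ fails in some such extension, hence — by Kaloujnine--Krasner — in some $X^*_j \Wr Y^*_j$ with $X^*_j \in \var A$ and $Y^*_j$ abelian; by Lemma~\ref{karevor} (and Lemma~\ref{X*WrY_belongs_var}) we may replace $X^*_j$ by $A$ itself and absorb the quotient/subgroup/cartesian-power operations, so each $w_j$ is falsified already in $A \Wr B$, say by a tuple with ``$B$-coordinate'' lying in a finitely generated subgroup $B_j \le B$ and ``base-group coordinates'' supported on finitely many cosets. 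Now I use discrimination \emph{of $B$}: the finitely many ``positions in $B$'' that matter for all the $w_j$ together can be matched inside one copy of $B$ by a single homomorphism, and since $A^B$ is a cartesian power of $A$ indexed by $B$, the base-group values can be chosen compatibly. Assembling these gives one homomorphism $F_n \to A \Wr B$ falsifying every $w_j$ at once, which is exactly discrimination of $\A \cdot \var A$; a discriminating set generates the variety it discriminates, so $\var{A \Wr B} = \var A \cdot \A$.

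The main obstacle I anticipate is the bookkeeping in the last step: translating ``$w_j$ fails in $A \Wr B$'' into a finite amount of data inside $B$ and inside the base group, and then checking that the \emph{single} homomorphism provided by discrimination of $B$ can be lifted to a homomorphism of $F_n$ into $A \Wr B$ whose components on the base group are simultaneously correct. The delicate point is that a non-trivial value of a word in a wreath product involves both the projection to the top group $B$ and the ``twisted'' contributions in the base group $A^B$, which depend on the $B$-values through the action; so one must set up the reduction to $B$-discrimination carefully — for instance by first choosing the $B$-coordinates via discrimination of $B$, \emph{then} reading off which coordinates of $A^B$ are relevant, and only then picking the $A$-values. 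Once the dictionary between ``words failing in a metabelian-type extension'' and ``finite configurations in $A\Wr B$'' is fixed, the argument should go through uniformly, with the two cases ($B \supseteq \Z$ versus $B$ unbounded torsion) handled by the same discrimination argument and differing only in which concrete discriminating subgroup or quotient of $B$ one invokes.
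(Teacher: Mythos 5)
Your first half matches the paper: reduce to showing that $B$ discriminates $\A$, using cyclic subgroups of unbounded order. (Two corrections there. First, your case split for torsion $B$ --- ``for some prime $p$ the component $B_p$ is unbounded'' --- misses the case where every $B_p$ is bounded but infinitely many primes occur, e.g.\ $B=\sum_p C_p$; what is actually needed, and what the paper uses, is only that $B$ contains cyclic subgroups $\langle c_i\rangle$ of unbounded order. Second, that these finite cycles form a discriminating set for $\A$ is not an immediate quotation from~\cite{B+3N}: the paper gives a short argument, controlling the values of all \emph{subwords} of the $w_j$ in the infinite cycle and then choosing $c_i$ of order exceeding all exponents that occur, so that the computation descends to $\langle c_i\rangle$ without collisions. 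Your alternative of passing to $C_{p^\infty}$ as a quotient is not licensed by Lemma~\ref{karevor} without further argument.)

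The genuine gap is in your second paragraph. You claim that each $w_j$, being falsified in some $X^*_j\Wr Y^*_j$ with $Y^*_j$ abelian, is therefore ``falsified already in $A\Wr B$'' after ``absorbing the quotient/subgroup/cartesian-power operations'' via Lemmas~\ref{X*WrY_belongs_var} and~\ref{karevor}. But those lemmas absorb $\mathsf{Q}$, $\mathsf{S}$, $\mathsf{C}$ only on the \emph{passive} side, and only $\mathsf{Q}$ and $\mathsf{S}$ on the active side; they do \emph{not} give $A\Wr Y^*\in\var{A\Wr B}$ for $Y^*\in\mathsf{C}\{B\}$ or for a general $Y^*\in\var B$. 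That missing containment is precisely the one condition the Corollary of Section~\ref{QSC} isolates as still to be proved, and it genuinely fails in general ($C_p\Wr C_p$ versus $C_p\Wr(C_p\oplus C_p)$), so your step is circular. The paper closes exactly this gap by quoting the theorem of~\cite{B+3N} that if $B$ discriminates $\var B$ then $A\Wrr B$ discriminates (hence generates) $\var A\cdot\var B$ for arbitrary $A$. You instead sketch a from-scratch proof of that theorem and explicitly leave its essential step --- lifting the single discriminating homomorphism into $B$ to a homomorphism of $F_n$ into $A\Wr B$ with compatible base-group coordinates --- as unresolved ``bookkeeping.'' Either cite the wreath-product discrimination theorem of~\cite{B+3N}, as the paper does, or carry out that lifting in full; as written, the proof is incomplete at its decisive point.
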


\begin{proof}
Assume, firstly, that $B$ contains an element $c$ of infinite order. Then $B$ contains an 
infinite cycle $C=\langle c \rangle$ which is a discriminating group for the variety $\A$~\cite{B+3N}. According to Lemma~\ref{D<QS(D_1)}, $B$ also discriminates $\A$. Therefore
for an {\it arbitrary} (and not only abelian) group $A$ the wreath product $A\Wr B$ discriminates
$\var A \cdot \A$ because in this situation the {\it direct} wreath product $A\Wrr B$ discriminates
$\var A \cdot \A$~\cite{B+3N}: $A\Wrr B \in {\sf S}(A\Wr B)$.

Assume now that $B$ is not of finite exponent but that $B$ contains no element of infinite order.
Then there exists a sequence of elements
\begin{equation}
\label{ccc}
	c_1,c_2,\ldots \in B 
\end{equation}
such that for arbitrary $l\in \N$ there is such a $c_{i(l)}$ whose $\exp{c_{i(l)}} \ge l$. Let 
\begin{equation}
\label{w(x)...w(x)}
	w_1(x_1,\ldots ,x_d),\ldots,w_k(x_1,\ldots ,x_d) 
\end{equation}
be a finite set of words none of which is an identically $1$ for all abelian groups (we can 
assume all these words to contain the same variables $x_1,\ldots,x_d$ because the number of
these words is finite). Since the infinite cycle $C=\langle c \rangle$ discriminates $\A$, there exist elements
$c^{i_1}, \ldots ,c^{i_d}\in C$  ($i_1,\ldots,i_d \in \Z$) such that 
\begin{equation}
\label{w(c)...w(c)}
w_1(c^{i_1},\ldots ,c^{i_d})\not= 1,\ldots,w_k(c^{i_1},\ldots ,c^{i_d}) \not= 1.
\end{equation}
Now let us consider all these values~(\ref{w(c)...w(c)}) together with all values of all {\it 
subwords} of words~(\ref{w(x)...w(x)}) over elements $c^{i_1}, \ldots ,c^{i_d}$. Since in this
way we will get only \textit{finitely many} values, that is, only finitely many elements $c^j\in C$, we
can choose a power $c^{j_0}$, such that the absolute value of $j_0$  is greater than that of
all $j$'s obtained.  Now take such a $c_{i(j_0)}$ in~(\ref{ccc}) that $\exp{c_{i(j_0)}}$ is greater than $\abs{j_0}$. Since, clearly, all values~(\ref{w(c)...w(c)}) remain unchanged if they are
calculated not in $C$ but in $\langle c_{i(j_0)} \rangle$ (that is, modulo
$\exp{c_{i({j_0})}}$), we get that cycles
\begin{equation}
\label{<c><c>}
\langle c_{1} \rangle, \langle c_{2} \rangle, \ldots \le B
\end{equation}
form a discriminating set for the variety $\A$. And since $\{ \langle c_{1} \rangle, \langle c_{1} 
\rangle, \ldots\} \sub {\sf S} B$, the group $B$ discriminates $\A$ according to
Lemma~\ref{D<QS(D_1)}.
\end{proof}

An analog of this lemma for the ``passive'' group $A$ is also true:

\begin{lemma}
\label{InfiniteExpA}
If the group $A$ is not of finite exponent, then cartesian wreath product $A\Wr B$ generates 
the variety $\var{A}\cdot \var{B} = \A\cdot \var{B}$. 
\end{lemma}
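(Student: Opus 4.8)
The plan is to mirror the proof of Lemma~\ref{InfiniteExpB}, now with $A$ playing the role that $B$ played there. One inclusion is immediate: $A\in\var A=\A$ and $B\in\var B$, so the extension $A\Wr B$ lies in $\var A\cdot\var B=\A\cdot\var B$, and since a variety is closed under ${\sf Q},{\sf S},{\sf C}$ this gives $\var{A\Wr B}\sub\A\cdot\var B$. For the converse, the case in which $B$ is \emph{also} of non-finite exponent is already settled by Lemma~\ref{InfiniteExpB}, which yields $\var{A\Wr B}=\var A\cdot\A=\A\cdot\var B$. So I may assume $B$ has finite exponent $n$, hence $\var B=\A_n$, and the task reduces to showing $\A\cdot\A_n\sub\var{A\Wr B}$.

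Next I would extract from $A$ a discriminating set for $\A$ lying inside ${\sf S}A$, by the very argument used in the proof of Lemma~\ref{InfiniteExpB} but applied to $A$: if $A$ has an element of infinite order, then the cyclic subgroup it generates is a single discriminating group $\Z\le A$ for $\A$; and if $A$ is torsion of non-finite exponent, then $A$ has elements of arbitrarily large order (otherwise the exponent would divide an lcm of finitely many integers), and the same finiteness bookkeeping — choosing a ``large'' power that dominates the finitely many word and subword values — shows the corresponding cycles form a discriminating set for $\A$. Call the resulting set $\D\sub{\sf S}A$. By Lemma~\ref{karevor} each $D\Wr B$ with $D\in\D$ embeds into $A\Wr B$, so $\{D\Wr B\mid D\in\D\}\sub\var{A\Wr B}$, and it suffices to prove that this family discriminates (in particular generates) $\A\cdot\A_n$.

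Thus the crux, and the step I expect to be the main obstacle, is the ``passive'' analogue of the fact from~\cite{B+3N} invoked in Lemma~\ref{InfiniteExpB}: that \emph{if $\D$ discriminates a variety $\U$, then $\{D\Wr B\mid D\in\D\}$ discriminates $\U\cdot\var B$} (with $\U=\A$, $\var B=\A_n$ this finishes the proof, since a discriminating set generates its variety). I would first try to quote this directly from~\cite{B+3N}; failing that, I would prove it by the standard bookkeeping on word values in a direct wreath product $D\Wrr B$ (which generates the same variety as $D\Wr B$). Given a finite set $V$ of words in $x_1,\dots,x_d$, none a law of $\U\cdot\var B$, each $w\in V$ either already fails as a law of $\var B$, or lies in the verbal subgroup of $\var B$ and hence evaluates in every $D\Wrr B$ into the base group $D^{(B)}\in\U$; for the latter words one uses discrimination of $\U$ by $\D$ to make all their (simultaneously many) base-group values nontrivial inside one fixed $D\in\D$, the finiteness of $V$ again permitting a single choice of $D$ in the torsion case. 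The delicate point concerns the former words: one must produce, inside $B$ itself, one tuple $(b_1,\dots,b_d)$ that simultaneously puts every such $w$ into a falsifiable situation in $D\Wrr B$ — either $w(b_1,\dots,b_d)\neq 1$ in the top group $B$, or the Fox-derivative-type coefficients of $w$ at $(b_1,\dots,b_d)$ do not all vanish in $\Z[B]$, so that a suitable choice of base values rescues it. Reconciling finitely many such words with one tuple in the comparatively small group $B$ is, morally, the same difficulty as the question — captured by the Corollary of Section~\ref{QSC} — of whether the cartesian powers $A\Wr B^{I}$ of the active group stay in $\var{A\Wr B}$, and it is the part I would expect to require the most work; the hypothesis that the \emph{passive} group $A$ is not of finite exponent enters precisely in guaranteeing, through discrimination of $\A$ by $\D\sub{\sf S}A$, that the base group always has enough room to complete the rescue.
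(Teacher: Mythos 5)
Your reductions (the easy inclusion, disposing of the case $\exp B=\infty$ via Lemma~\ref{InfiniteExpB}, and passing to subgroups $D\le A$ via Lemma~\ref{karevor}) are fine, but the step you yourself flag as the crux is a genuine gap, and the principle you hope to quote from \cite{B+3N} is false as you state it. What \cite{B+3N} actually gives is: if $\D$ discriminates $\U$ \emph{and} the top group $B$ discriminates $\var B$, then the wreath products discriminate $\U\cdot\var B$. Dropping the second hypothesis is fatal: take $\D=\{\prod_{i=1}^{\infty}C_p\}$, which discriminates $\U=\A_p$, and $B=C_p$; then $D\Wr B$ does not even \emph{generate} $\A_p\cdot\A_p$, since $\exp D=p=\exp B$ and $B[p]/B[1]\cong C_p$ is finite (Lemma~\ref{nu>mu} / Theorem~\ref{MainCriterion}). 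So ``a discriminating set for the bottom variety inside ${\sf S}A$ gives the base group enough room'' cannot be the operative mechanism, and the word-value/Fox-derivative argument you sketch is aimed at a statement that, at the level of generality you set up, is simply untrue; the one reconciliation step you would need you explicitly leave unproved. (In the special case $\U=\A$ your desired conclusion is of course true --- it \emph{is} the lemma --- but nothing in your proposal establishes it.)

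The hypothesis that $A$ has unbounded exponent is used quite differently in the paper, and the actual proof is much shorter. Since $\var A=\A={\sf QSC}(A)$, the infinite cycle $C$ lies in ${\sf QSC}(A)$, and $C_n\le B$ where $n=\exp B$; so by Lemma~\ref{karevor} it suffices to show $\var{C\Wr C_n}=\A\cdot\A_n$. Now $C$ has quotients $C_{p_1},C_{p_2},\ldots$ for infinitely many primes $p_i$ coprime to $n$, so $\var{C\Wr C_n}$ contains each $C_{p_i}\Wr C_n$, hence by Houghton's theorem (which is exactly where coprimality is needed) each $\A_{p_i}\cdot\A_n$, hence the join $\left(\A_{p_1}\cup\A_{p_2}\cup\cdots\right)\cdot\A_n=\A\cdot\A_n$. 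No discrimination property of the wreath products themselves is invoked. If you want to keep your set-up, you would still have to prove directly that $\bigvee_{D\in\D}\var{D\Wr B}=\A\cdot\var B$ for your particular $\D$, and the only way I see to do that again runs through the coprime-primes/Houghton argument rather than through discrimination.
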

\begin{proof}
Taking into account Lemma~\ref{InfiniteExpB} we assume, without loss of generality, that $B$ is 
a group of finite exponent $n$. Since the infinite cycle $C$ belongs to $\var A = \A$, it
sufficient, according to Lemma~\ref{karevor}, to prove that 
$$
	\var{C \Wr C_{n}}=\var{A}\cdot \var{B}=\A \cdot \A_{n},
$$
where $C_{n}$ is a finite cycle of order $n$. We can choose infinitely many cycles 
$$
	C_{p_1}, C_{p_2}, \ldots \in {\sf Q} (C)
$$ 
of orders $p_1,p_2,\ldots$ all coprime to $n$ and to each other. Thus $\var{C \Wr C_{n}}$ 
contains each one of the wreath products 
$
	C_{p_1} \Wr C_{n},\,\, C_{p_2} \Wr C_{n}, \ldots
$
and, thus, varieties $\A_{p_1}\cdot \A_{n}, \,\, \A_{p_2}\cdot \A_{n}$ generated by these 
wreath products according to the result of Houghton~\cite{HannaNeumann}.  It remains to use the
fact that:  
$$
	(\A_{p_1}\cdot \A_{n}) \cup (\A_{p_2}\cdot \A_{n}) \cup \cdots =
	\left( \A_{p_1}\cup \A_{p_2} \cup \cdots \right)\cdot \A_{n}
	=\A \cdot \A_{n}.
$$
\end{proof}

We collect the information of Lemmas~\ref{InfiniteExpB} and~\ref{InfiniteExpA} below:

\begin{theorem}
\label{InfiniteExpAorB}
If at least one of the groups $A$ and $B$ is not of finite exponent, then cartesian or direct 
wreath product of groups $A$ and $B$ generates the variety $\var{A}\cdot \var{B}$. 
\end{theorem}

\section{The case of finitely generated abelian groups}
\label{The_case_of_finitely_generated_groups}

\noindent
Assume $A$ and $B$ to be arbitrary {\it finitely generated}\, abelian groups. If in a direct 
decomposition of $A$ or $B$ an infinite cycle is present, then we apply the construction of
Section~\ref{The_cases_of_non-finite_exponent}. So what we have to deal with are merely {\it
finite} abelian groups $A$ and $B$.

\begin{lemma}
\label{FiniteAB}
For finite abelian groups $A$ and $B$ of exponents $m$ and $n$ respectively the wreath product 
of $A$ and $B$ generates the variety $\var{A}\cdot \var{B}=\A_{m} \cdot \A_{n}$ if and only if the exponents $m$ and $n$ are coprime.
\end{lemma}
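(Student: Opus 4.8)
The plan is to prove the two directions separately, using Houghton's theorem for finite cycles as the black box. For the ``if'' direction, suppose $\gcd(m,n)=1$. Since $A$ and $B$ are finite abelian groups, each splits as a direct sum of cyclic groups of prime-power order, and the primes occurring in $A$ are disjoint from those occurring in $B$. By Lemma~\ref{karevor}, $\var{A\Wr B}$ contains $A'\Wr B'$ whenever $A'$ is a quotient of $A$ and $B'$ is a quotient (equivalently here, a subgroup) of $B$; in particular it contains $C_m\Wr C_n$. By Houghton's result (as cited from~\cite{HannaNeumann}), $\var{C_m\Wr C_n}=\A_m\cdot\A_n$ since $m,n$ are coprime. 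It remains to see $\A_m\cdot\A_n\supseteq\var{A}\cdot\var{B}$, which is immediate because $\var A\subseteq\A_m$ and $\var B\subseteq\A_n$; and the reverse inclusion $\var{A\Wr B}\subseteq\var A\cdot\var B$ holds for all groups. Hence $\var{A\Wr B}=\var A\cdot\var B=\A_m\cdot\A_n$.

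For the ``only if'' direction, suppose some prime $p$ divides both $m$ and $n$. I would first reduce to finite cyclic $p$-groups: by Lemma~\ref{karevor}, passing to suitable subgroups and quotients, $\var{A\Wr B}$ contains $C_{p^a}\Wr C_{p^b}$ for some $a,b\ge 1$ (taking $p^a$ the exact power of $p$ in $m$ and $p^b$ the exact power of $p$ in $n$, realized inside $A$ and $B$ respectively as direct summands). The key point is then to bound the nilpotency class, or more precisely to show $\var{A\Wr B}$ fails to contain the free group of $\A_{p}\cdot\A_{p}$ of the relevant rank; concretely, I would show $C_{p^a}\Wr C_{p^b}$, being a finite $p$-group, lies in $\Ni_c\cap\B_{p^{a+b}}$ for some bounded $c$, whereas $\var A\cdot\var B\supseteq \A_p\cdot\A_p$ contains metabelian groups of unbounded nilpotency class (indeed $\A_p\cdot\A_p$ is not nilpotent). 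Since every group in $\var{A\Wr B}$ is a quotient of a subgroup of a cartesian power of $A\Wr B=$ a finite $p$-group-by-... — wait, $A\Wr B$ need not be a $p$-group, but its image controlling the $p$-part is — I would isolate the obstruction by applying the laws of $C_{p^a}\Wr C_{p^b}$. The cleanest route: Houghton's theorem already tells us $\var{C_{p^a}\Wr C_{p^b}}\subsetneq\A_{p^a}\cdot\A_{p^b}$ when $p\mid\gcd$, so $\var{C_{p^a}\Wr C_{p^b}}$ satisfies a nontrivial law $w$ of $\A_{p^a}\cdot\A_{p^b}$. I would then argue that this same law $w$ (or a derived one) is satisfied by all of $A\Wr B$ — using that $A\Wr B$ is built from finite abelian pieces of exponents dividing $m,n$ and that the $p$-part structure is captured by $C_{p^a}\Wr C_{p^b}$ up to the operations ${\sf Q,S,C}$ — and hence by all of $\var{A\Wr B}$, while $w$ is not a law of $\A_m\cdot\A_n\supseteq\var A\cdot\var B$.

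The main obstacle is the last step: transferring Houghton's negative statement about cycles to a negative statement about the possibly-noncyclic finite groups $A$ and $B$. The danger is that a law failing in $\A_{p^a}\cdot\A_{p^b}$ might nonetheless fail to be a law of $A\Wr B$ if the extra (prime-to-$p$ or lower-$p$-power) summands of $A$ and $B$ conspire to reintroduce the bad behavior. I expect to handle this by the following observation: if $w$ is the Houghton law for $C_{p^a}\Wr C_{p^b}$, then by coprimality considerations the prime-to-$p$ summands of $A$ and $B$ contribute only to varieties $\A_q\cdot\A_n$ and $\A_m\cdot\A_{q'}$ with $q,q'$ prime to $p$, each of which does equal the corresponding product variety and so imposes no obstruction; the only ``interaction'' term forcing a proper inclusion is the $p$-to-$p$ one, governed by $C_{p^a}\Wr C_{p^b}$. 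Making the decomposition $\var{A\Wr B}=\bigvee_{q}(\cdots)\,\vee\,\var{C_{p^a}\Wr C_{p^b}}$ (a finite join over primes $q\mid mn$, via Lemma~\ref{karevor} and the abelian structure) precise, and checking it is a \emph{join} rather than merely a containment, is where the real work lies; once that is in hand, $\var{A\Wr B}\subseteq(\A_m\cdot\A_n$-with-the-$p$-part-replaced-by-$\var{C_{p^a}\Wr C_{p^b}})\subsetneq\A_m\cdot\A_n$, completing the proof.
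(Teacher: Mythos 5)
Your ``if'' direction is correct and is essentially the paper's own argument: pass to cyclic subgroups (or quotients) of orders $m$ and $n$ inside $A$ and $B$ via Lemma~\ref{karevor}, reduce to $C_m\Wr C_n$, and quote Houghton; the containment $\var{A\Wr B}\subseteq\var{A}\cdot\var{B}\subseteq\A_m\cdot\A_n$ then forces equality. No issue there.

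The ``only if'' direction has a genuine gap, which you have in fact flagged yourself. The decomposition $\var{A\Wr B}=\bigvee_{q}(\cdots)\vee\var{C_{p^a}\Wr C_{p^b}}$ on which your argument hinges is never established, and as stated it is not even the right assertion: the join would also have to account for the cross-prime interaction terms $\A_{p_i^{u_i}}\cdot\A_{q_j^{k_j}}$ with $p_i\ne q_j$, and showing that the $p$-local part of $\var{A\Wr B}$ is controlled by $A_p\Wr B_p$ is exactly the content of the later Lemma~\ref{TechnivalResult} of the paper, whose proof requires a Sylow/critical-subgroup analysis and not merely Lemma~\ref{karevor}. So what you describe as ``where the real work lies'' is the entire missing proof, and the worry you raise about lower $p$-power and prime-to-$p$ summands ``conspiring'' is precisely what has to be excluded. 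The paper closes this direction in two lines by a route you did not consider: $A\Wr B$ is a \emph{finite} group, so if $\var{A\Wr B}=\A_m\cdot\A_n$ then $\A_m\cdot\A_n$ is a Cross variety, and by {\v S}melkin's theorem on product varieties generated by a finite group~\cite{ShmelkinOnCrossVarieties} this forces $m$ and $n$ to be coprime. (A self-contained variant of the same idea: a Cross variety has only finitely many subvarieties, whereas $\A_p\cdot\A_p\subseteq\A_m\cdot\A_n$ has infinitely many --- this is where the unbounded nilpotency classes you mention actually enter.) Either of these finishes the argument without any local--global decomposition of $\var{A\Wr B}$.
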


\begin{remark}
\label{remark1}
So, as we see, {\it the results of Higman~\cite{Some_remarks_on_varieties} and 
Houghton~\cite{HannaNeumann} remain true not only for finite cycles, but also for arbitrary
finite groups}. On the other hand, as the familiar example $\var{C_p \Wr \prod_{i=1}^{\infty}
C_p}= \A_p \cdot \A_p$ shows ($p$ is a prime), the criterion of Higman  and Houghton has no
direct analog for the case of infinite groups, even for the case of infinite groups of finite
exponent.
\end{remark}

\begin{proof}[Proof of Lemma~\ref{FiniteAB}]
If $\var{A}\cdot \var{B}=\A_{m} \cdot \A_{n}= \var{A\Wr B}$, then $\A_{m} \cdot \A_{n}$ is a 
Cross variety~\cite{HannaNeumann} and $m$ and $n$ are 
coprime according to result of {\v S}melkin on product varieties of group generated by a finite group~\cite{ShmelkinOnCrossVarieties}.

On the other hand, if the condition of the lemma is satisfied, we can choose a cyclic subgroup 
$\langle a_{m} \rangle$ of order $m$ in $A$ and a cyclic subgroup $\langle b_{n} \rangle$ of
order $n$ in $B$. Now according to Lemma~\ref{karevor} $\var{A \Wr B}=\var{\langle a_{m}
\rangle \Wr \langle b_{n} \rangle}$ and the latter is equal $\A_m\cdot \A_n$ according to the
result of Houghton.
\end{proof}

Let us present the information of Sections~\ref{The_cases_of_non-finite_exponent} 
and~\ref{The_case_of_finitely_generated_groups} in an ``easy-to-use'' form:

\begin{theorem}
\label{FinitelyGenerated}
Let $A$ and $B$ be arbitrary finitely generated abelian groups with direct decompositions 
respectively
$$	A= \underbrace{C \oplus  \cdots \oplus C}_{r_A} \oplus \,
	C_{p_1^{u_1}} \oplus \cdots \oplus C_{p_{s}^{u_s}}
\,
\mathrm{and}
\,\,\,
	B= \underbrace{C \oplus  \cdots \oplus C}_{r_B} \oplus \,
	C_{q_1^{k_1}} \oplus \cdots \oplus C_{q_{d}^{k_{d}}}.
$$
Then the cartesian or direct wreath product of groups $A$ and $B$ generates the product variety 
$\var A \cdot \var B$ if and only if
\begin{enumerate}
\item at least one of the decompositions of $A$ and $B$ contains  a non trivial infinite cycle, 
that is, $r_A\not=0$ or $r_B\not=0$,

\item or all cycles in the decompositions of $A$ and $B$ are finite, that is, $r_A =0$, $r_B =0$,
 and the sets of  primes $\{p_1,\ldots,p_{s}\}$ and $\{q_1,\ldots,q_{d}\}$ have empty intersection.
\end{enumerate}
\end{theorem}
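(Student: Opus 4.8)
The plan is to combine the criterion for non-finite exponent (Theorem~\ref{InfiniteExpAorB}) with the finite case (Lemma~\ref{FiniteAB}), reducing everything to the finitely generated finite-abelian-group situation via the structure theorem and the reduction lemmas of Section~\ref{QSC}. So first I would dispose of condition~(1): if $r_A\neq 0$ or $r_B\neq 0$, then $A$ or $B$ has an element of infinite order, hence is not of finite exponent, and Theorem~\ref{InfiniteExpAorB} immediately gives $\var{A\Wr B}=\var A\cdot\var B$. This handles sufficiency in the first case with no work.

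Next, assume $r_A=r_B=0$, so $A$ and $B$ are finite abelian groups of exponents $m=\mathrm{lcm}(p_i^{u_i})$ and $n=\mathrm{lcm}(q_j^{k_j})$ respectively. The sets $\{p_1,\dots,p_s\}$ and $\{q_1,\dots,q_d\}$ are precisely the prime divisors of $m$ and of $n$, so they are disjoint if and only if $m$ and $n$ are coprime. Then I would simply quote Lemma~\ref{FiniteAB}: $\var{A\Wr B}=\A_m\cdot\A_n=\var A\cdot\var B$ holds precisely when $\gcd(m,n)=1$, i.e. precisely when condition~(2) holds. That settles the equivalence in the second case.

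It remains to check that the two listed conditions are genuinely exhaustive and mutually consistent with the ``if and only if'', i.e. that when \emph{neither} (1) nor (2) holds the wreath product does \emph{not} generate $\var A\cdot\var B$. If (1) fails then $r_A=r_B=0$ and $A$, $B$ are finite; if in addition (2) fails then $m$ and $n$ share a prime, and Lemma~\ref{FiniteAB} (whose ``only if'' direction rests on {\v S}melkin's theorem on Cross varieties) tells us $\var{A\Wr B}\neq\var A\cdot\var B$. Conversely, whenever (1) or (2) holds we have just shown equality. So the proof is essentially a two-line case split feeding into the two prior results.

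The main obstacle here is not really in this theorem — it is a clean corollary — but rather in making sure the bookkeeping is right: one must verify that for a \emph{finite} abelian group the exponent is the lcm of the orders of the cyclic summands and that its set of prime divisors equals the set of primes $p_i$ appearing in the primary decomposition, so that ``disjoint prime sets'' and ``coprime exponents'' really coincide. This is routine from the structure theorem for finite abelian groups. One should also note explicitly that, by the remark in the excerpt following Lemma~\ref{X*WrY_belongs_var} and the discussion in Section~\ref{QSC}, the cartesian and direct wreath products generate the same variety, so the statement ``cartesian or direct'' needs no separate argument. I would write the proof in exactly this order: reduce via the structure theorem, dispatch case~(1) by Theorem~\ref{InfiniteExpAorB}, dispatch case~(2) and its converse by Lemma~\ref{FiniteAB}.
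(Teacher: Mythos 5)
Your proposal is correct and follows exactly the route the paper intends: Theorem~\ref{FinitelyGenerated} is stated there as a summary of Theorem~\ref{InfiniteExpAorB} (for the case $r_A\neq 0$ or $r_B\neq 0$) and Lemma~\ref{FiniteAB} (for the finite case, in both directions, with the ``only if'' resting on {\v S}melkin's theorem). The bookkeeping you flag --- coprimality of exponents versus disjointness of the prime sets in the primary decompositions --- is indeed the only point requiring a word, and you handle it correctly.
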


\section{The case of arbitrary abelian $p$-groups}  
\label{The_case_of_abelian_$p$-groups}

\subsection{Some notations}
As we mentioned in Remark~\ref{remark1}, in the case of infinitely generated abelian groups $A$ 
and $B$ of finite exponent no analogs of Lemma~\ref{FiniteAB} and of
Theorem~\ref{FinitelyGenerated} do exist. What a variety can be generated by wreath product,
say, 
$
	C_p \Wr (C_{p^2}\oplus \sum_{i=1}^{\infty}C_p)
$
of groups of exponent $p$ and $p^2$? We clear this situation by means of a specially defined 
function $\lambda(A,B,t)$:

\begin{definition}
\label{lambda(A,B,t)}
For given abelian $p$-groups $A$ and $B$ of finite exponents and for given $t\in \N$ the value of the function
$$
	\lambda = \lambda(A,B,t), 
$$
is defined to be the {\it maximum of the nilpotency classes of the $t$-generated groups} of variety 
$\var{A\Wr B}$. 
\end{definition}

Firstly we have to observe that this definition is {\it correct}: $t$-generated groups of 
$\var{A\Wr B}$ belong to the variety generated by all $t$-generated subgroups of the group
$A\Wr B$~\cite[16.31]{HannaNeumann}. The number of non-isomorphic copies of mentioned
$t$-generated subgroups is finite and every one of them is {\it a finite $p$-group} and, thus,
nilpotent (in the next subsection we will find concrete upper bounds for
nilpotency classes of $t$-generated groups of $\var{A\Wr B}$).

For the purposes of the rest of this paper for the given $p$-group $B$ of finite exponent $p^k$ 
let us denote: $k(B,p)=k$. If $B$ is not a $p$-group but still has finite exponent, then let
us denote by $k(B,p)$ the largest $k$ for which  $p^k$ divides $\exp B$. Further, for the
given $B$ and positive integer $s$ let us denote 
$
	B[s]=\langle b\in B \,|\, b^s=1 \rangle.
$ 
The factor groups 
$$
B[p^k]/B[p^{k-1}]=B[p^{k(B,p)}]/B[p^{k(B,p)-1}]
$$
will play a key role in our construction. So let us clear what do they mean in our situation of 
$p$-groups. According to Pr\"ufer's Theorem the group $B$ is a direct sum of finite cycles which
can be arranged as:
\begin{equation}
\label{TheFormOfB}
	B= C_{p^{k_1}} \oplus \,\,  C_{p^{k_2}} \oplus \cdots,
\end{equation}
where $k_1=k=k(B,p) \ge k_2 \ge \cdots$. From this decomposition it is clear that 
\begin{equation}
\label{TheFormOfB[]/B[]}
	B[p^{k}]/B[p^{k-1}]=\underbrace{C_{p} \oplus
	\cdots  \oplus \,\, C_{p}}_{\text{$\mu$ times}},
\end{equation}
where $\mu$ is such an ordinal that $k_1=k, \ldots ,k_\mu=k $ and 
$k_{\mu +1}<k$ (notation is correct for the set of all ordinals greater than $\mu$ can be 
well-ordered).

\subsection{An upper bound for the function $\lambda(A,B,t)$}
\label{An_upper_bound_for_the_function}
According to the remark following Definition~\ref{lambda(A,B,t)}, $\lambda(A,B,t)$ is bounded by the
maximum of the nilpotency classes of $t$-generated subgroups of $A\Wr B$. Let $H$ be such a
subgroup and $A_H$ be its intersection with the base group $A^B$ of $A\Wr B$. So $A_H$ is a group of the variety $\A_{p^u}$, where $p^u=\exp A$. $A_H$ is normal in $H$
and $H/A_H$ is isomorphic to an, at most, $t$-generated subgroup $A_B$ of 
$$
	(A\Wr B)/A^B \cong A
$$
and, therefore, according to Kaloujnine and Krasner Theorem~\cite{KaloujnineKrasner}, we get, 
that $H$ is embeddable into the  cartesian wreath product $A_H \Wr B_H$, where $A_H$ is of
exponent $p^{u'}$ dividing $p^u$ and where 
$$
	B_H = C_{k'_1} \oplus C_{k'_2} \oplus \cdots \oplus C_{k'_t} \quad (k'_1\ge k'_2 \ge \cdots\ge 
k'_t)
$$ 
is certain subgroup of $B$. So $B_H$ is finite and $A_H \Wr B_H$ is a {\it direct} wreath product. 
Thus, applying the result of Liebeck~\cite{Liebeck_Nilpotency_classes} (on the nilpotency class of
the direct wreath product of finite abelian $p$-groups), we calculate the class $c$ of $A_H \Wr
B_H$:
$$
	c=\sum_{i=1}^t (p^{k'_i}-1)+ (u'-1)(p-1)p^{k'_1-1}+1.
$$
And since $u'\le u$, \,\,$k'_1 \le k_1 = k$,\,\, $k'_2\le k_2$, $\ldots$, we have:
\begin{lemma}
For given abelian $p$-group $A$ of finite exponent $p^u$ and for the abelian $p$-group $B$ of form~(\ref{TheFormOfB}):
\begin{equation}
\label{ValueForLambda}
	\lambda(A,B,t) \le \sum_{i=1}^t (p^{k_i}-1)+ (u-1)(p-1)p^{k(B,p)-1}+1.
\end{equation}
\end{lemma}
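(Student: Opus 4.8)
The plan is to read off the nilpotency class bound directly from the structure already laid out in the text. The key observation is that every $t$-generated subgroup $H$ of $A\Wr B$ has already been shown to embed into a \emph{direct} wreath product $A_H\Wrr B_H$ with $A_H$ of exponent $p^{u'}\mid p^u$ and $B_H=C_{p^{k'_1}}\oplus\cdots\oplus C_{p^{k'_t}}$ a finite subgroup of $B$ with $k'_1\ge k'_2\ge\cdots\ge k'_t$; note also that $B_H$ may be taken with at most $t$ cyclic summands since it is $t$-generated, and any ``missing'' summands can be recorded as $k'_i=0$ so that the sum always runs to $t$. Since $H$ is nilpotent of class at most the class of $A_H\Wrr B_H$, it suffices to bound the latter and then maximise over all admissible $(u',k'_1,\ldots,k'_t)$.

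\medskip\noindent First I would invoke Liebeck's formula~\cite{Liebeck_Nilpotency_classes} for the nilpotency class $c$ of the direct wreath product of finite abelian $p$-groups, which for $A_H$ of exponent $p^{u'}$ and $B_H=\bigoplus_{i=1}^t C_{p^{k'_i}}$ gives
$$
	c=\sum_{i=1}^t (p^{k'_i}-1)+ (u'-1)(p-1)p^{k'_1-1}+1 .
$$
Then I would argue monotonicity of the right-hand side in each parameter separately: it is nondecreasing in every $k'_i$ (each term $p^{k'_i}-1$ increases, and $p^{k'_1-1}$ increases in $k'_1$), and nondecreasing in $u'$ (the coefficient $(u'-1)$ increases while everything multiplying it is nonnegative). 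Using the constraints $u'\le u$, $k'_1\le k_1=k(B,p)$, and $k'_i\le k_i$ for $i=2,\ldots,t$ — which hold because $B_H\le B$ forces its invariants to be dominated termwise by those of $B$ in the arrangement~(\ref{TheFormOfB}) — each individual term is bounded above by its counterpart with $u'$ replaced by $u$ and $k'_i$ by $k_i$. Summing these bounds yields
$$
	\lambda(A,B,t)\le\max_H c\le \sum_{i=1}^t (p^{k_i}-1)+ (u-1)(p-1)p^{k(B,p)-1}+1,
$$
which is exactly~(\ref{ValueForLambda}).

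\medskip\noindent The only mildly delicate point is the termwise domination $k'_i\le k_i$: a finite subgroup of an abelian $p$-group of the form~(\ref{TheFormOfB}) need not have its invariant factors be a ``sub-multiset'' of $\{k_1,k_2,\ldots\}$ in an obvious way, but the inequality one actually needs is the weaker statement that the $i$-th largest invariant of $B_H$ is at most the $i$-th largest invariant of $B$. This follows because $B_H[p^j]\le B[p^j]$ for every $j$, and the number of cyclic summands of order $>p^{j}$ in a finite abelian $p$-group equals the rank of $p^{j}G/p^{j+1}G$, which cannot increase upon passing to a subgroup of $B$ of finite rank; equivalently one counts using the finite dimensions $\dim_{\F_p}(p^{j}B_H/p^{j+1}B_H)\le\dim_{\F_p}(p^{j}B/p^{j+1}B)$. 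With that in hand the proof is just the assembly of the displayed estimate, and the lemma follows.
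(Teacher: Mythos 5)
Your proof is correct and follows essentially the same route as the paper: embed each $t$-generated subgroup $H$ into a direct wreath product $A_H\Wrr B_H$ via Kaloujnine--Krasner, apply Liebeck's class formula, and bound termwise using $u'\le u$ and $k'_i\le k_i$. You even supply a justification for the termwise domination of the invariants of the subgroup $B_H\le B$, which the paper simply asserts.
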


\subsection{An example of a $t$-generated group in $\A_{p^u}\cdot \A_{p^k}$, the general 
criterion for wreath products of abelian $p$-groups}

\begin{example}
The product variety $\A_{p^u}\cdot\A_{p^k}$ contains the $t$-generated group $
	T(p,t)=C_{p^u}\Wrr \sum_{i=1}^{t-1} C_{p^k}.
$
According to~\cite{Liebeck_Nilpotency_classes} the nilpotency class of $T(p,t)$ is equal to:
$$
	\nu(p,t)= \sum_{i=1}^{t-1} (p^{k}-1)+ (u-1)(p-1)p^{k-1}+1.
$$
\end{example}

\begin{lemma} 
\label{nu>mu}
If $\abs{B[p^k]/B[p^{k-1}]}<\infty$, then for sufficiently large values of $t$:
$$
	\nu(p,t)> \lambda(A,B,t)
$$
and, thus, the group $T(p,t)$ does not belong to the variety $\var{A\Wr B}$.

On the other hand, if $\abs{B[p^k]/B[p^{k-1}]}=\infty$, then 
$\var{A\Wr B}=\A_{p^u}\cdot\A_{p^k}$.
\end{lemma}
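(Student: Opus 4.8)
The plan is to prove the two assertions separately, with the \emph{easy} direction being the first ($\abs{B[p^k]/B[p^{k-1}]}<\infty$) and the substantial work being in the second ($\abs{B[p^k]/B[p^{k-1}]}=\infty$). For the first assertion, I would compare the two explicit formulas already on the table. By~(\ref{ValueForLambda}), $\lambda(A,B,t)\le \sum_{i=1}^t (p^{k_i}-1)+(u-1)(p-1)p^{k-1}+1$, and by the Example, $\nu(p,t)=\sum_{i=1}^{t-1}(p^k-1)+(u-1)(p-1)p^{k-1}+1$. The additive term $(u-1)(p-1)p^{k-1}+1$ is common to both, so the comparison reduces to $\sum_{i=1}^{t-1}(p^k-1)$ versus $\sum_{i=1}^{t}(p^{k_i}-1)$. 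The hypothesis $\abs{B[p^k]/B[p^{k-1}]}<\infty$ says, via~(\ref{TheFormOfB[]/B[]}), that only finitely many of the exponents $k_i$ in decomposition~(\ref{TheFormOfB}) equal $k$; say there are exactly $\mu<\infty$ of them, and all remaining $k_i$ satisfy $k_i\le k-1$. Hence for $t>\mu$ the sum $\sum_{i=1}^{t}(p^{k_i}-1)$ grows, per new summand, by at most $p^{k-1}-1$, whereas $\sum_{i=1}^{t-1}(p^k-1)$ grows by $p^k-1>p^{k-1}-1$ per new summand. Since $\nu(p,t)-\lambda(A,B,t)$ therefore increases without bound as $t\to\infty$, for all sufficiently large $t$ we get $\nu(p,t)>\lambda(A,B,t)$; as $T(p,t)$ is $t$-generated of class $\nu(p,t)$, it cannot lie in $\var{A\Wr B}$, whose $t$-generated groups have class at most $\lambda(A,B,t)$.

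For the second assertion I would argue that $\var{A\Wr B}=\A_{p^u}\cdot\A_{p^k}$. The inclusion $\var{A\Wr B}\subseteq\A_{p^u}\cdot\A_{p^k}$ is automatic: $A\Wr B$ is an extension of the base group $A^B\in\A_{p^u}$ by $B\in\A_{p^k}$. For the reverse inclusion the idea is to exploit that $\abs{B[p^k]/B[p^{k-1}]}=\infty$ means, by~(\ref{TheFormOfB[]/B[]}), that decomposition~(\ref{TheFormOfB}) of $B$ contains infinitely many summands isomorphic to $C_{p^k}$. Consequently $B$ contains a subgroup isomorphic to $\bigoplus_{i=1}^{\infty}C_{p^k}$, and by Lemma~\ref{karevor} it suffices to show $\var{A\Wr (\bigoplus_{i\in I}C_{p^k})}\supseteq\A_{p^u}\cdot\A_{p^k}$ for a countably infinite $I$; also by Lemma~\ref{karevor} we may replace the passive group $A$ by a cyclic subgroup $C_{p^u}\le A$ realizing the exponent, so the target is $\var{C_{p^u}\Wr\bigoplus_{i\in I}C_{p^k}}=\A_{p^u}\cdot\A_{p^k}$. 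Now by Birkhoff's theorem $\A_{p^u}\cdot\A_{p^k}$ is generated by its finitely generated (hence finite) members, and every $t$-generated group of $\A_{p^u}\cdot\A_{p^k}$ is a quotient of a subgroup of $C_{p^u}\Wrr\bigoplus_{i=1}^{t-1}C_{p^k}$ (Kaloujnine--Krasner), which embeds into $C_{p^u}\Wr\bigoplus_{i\in I}C_{p^k}$ by choosing $t-1$ of the infinitely many $C_{p^k}$-summands; taking $\mathsf{Q}$, $\mathsf{S}$, $\mathsf{C}$ then gives all of $\A_{p^u}\cdot\A_{p^k}$.

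The main obstacle is the last claim of the previous paragraph: that every $t$-generated group in $\A_{p^u}\cdot\A_{p^k}$ is a quotient of a subgroup of the finite wreath product $C_{p^u}\Wrr\bigoplus_{i=1}^{t-1}C_{p^k}$, with exactly $t-1$ copies sufficing. A $t$-generated $G\in\A_{p^u}\cdot\A_{p^k}$ sits in an extension $1\to N\to G\to Q\to 1$ with $N\in\A_{p^u}$, $Q\in\A_{p^k}$; here $Q$ is $t$-generated abelian of exponent dividing $p^k$, so $Q$ embeds into $\bigoplus_{i=1}^{t}C_{p^k}$, but one must do better and pass to $t-1$ copies, using that $G$ (not just $Q$) is $t$-generated — a generating set for $G$ projecting onto a generating set of $Q$ means $Q$ needs at most $t$ generators, and the refinement to $t-1$ must come from the structure of the extension (one of the $t$ generators can be absorbed into $N$ after a change of generators, or equivalently the relation-module bookkeeping in Liebeck's class formula). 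By Kaloujnine--Krasner $G\hookrightarrow N\Wr Q$, and since $N$ has exponent dividing $p^u$ it lies in $\mathsf{QSC}(C_{p^u})$, so by Lemma~\ref{karevor} $N\Wr Q\in\var{C_{p^u}\Wr Q}\subseteq\var{C_{p^u}\Wr\bigoplus_{i\in I}C_{p^k}}$. I would handle the $t$-versus-$(t-1)$ bookkeeping by noting that it is precisely the matching of Liebeck's exponent count in $\nu(p,t)$ against the generic $t$-generated extension, so no finer argument than the nilpotency-class formulas is actually needed once the inclusion of varieties is set up; but writing this transition cleanly — making sure the finitely many summands of $B$ of exponent $p^k$ really do give \emph{every} relevant finite wreath product as a subgroup of $A\Wr B$, uniformly in $t$ — is where the care must go.
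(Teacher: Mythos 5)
Your first paragraph is correct and is essentially the paper's own proof of the first assertion: the common term $(u-1)(p-1)p^{k-1}+1$ cancels, only $\mu<\infty$ of the exponents $k_i$ in~(\ref{TheFormOfB}) equal $k$ and the rest are at most $k-1$, so $\nu(p,t)-\lambda(A,B,t)\ge (t-1-\mu)(p^k-p^{k-1})+1-p^{k-1}$, which is positive for large $t$; then the definition of $\lambda$ excludes $T(p,t)$ from $\var{A\Wr B}$.

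For the second assertion you take a genuinely different route. The paper settles it in three lines: an infinite factor $B[p^k]/B[p^{k-1}]$ yields a subgroup $D\le B$ isomorphic to an infinite direct power of $C_{p^k}$; such a $D$ discriminates $\A_{p^k}$ by~\cite{B+3N}; hence $B$ discriminates $\A_{p^k}$ by Lemma~\ref{D<QS(D_1)}, and a wreath product whose active group discriminates $\var{B}$ discriminates, hence generates, $\var{A}\cdot\var{B}$. Your route (reduce to $C_{p^u}\Wr\sum_{i\in I}C_{p^k}$ by Lemma~\ref{karevor}, then capture every finitely generated member of $\A_{p^u}\cdot\A_{p^k}$ via Kaloujnine--Krasner) also works, and its correct core is already in your last sentences: $G\hookrightarrow N\Wr Q$ with $N\in{\sf QSC}(C_{p^u})$ and $Q$ a $t$-generated subgroup of $\sum_{i=1}^{t}C_{p^k}\le\sum_{i\in I}C_{p^k}$, so Lemma~\ref{karevor} gives $G\in\var{C_{p^u}\Wr\sum_{i\in I}C_{p^k}}$. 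However, the claim you flag as the main obstacle --- that $t-1$ copies suffice, i.e.\ that every $t$-generated group of $\A_{p^u}\cdot\A_{p^k}$ is a quotient of a subgroup of the single group $C_{p^u}\Wrr\sum_{i=1}^{t-1}C_{p^k}$ --- is false when $u<k$: take $G=\sum_{i=1}^{t}C_{p^k}$. Any quotient of a subgroup of $C_{p^u}\Wrr\sum_{i=1}^{t-1}C_{p^k}$ is an extension of a normal subgroup of exponent dividing $p^u$ by a group requiring at most $t-1$ generators, whereas $G$ modulo any subgroup of exponent dividing $p^u$ still maps onto $\sum_{i=1}^{t}C_{p^{k-u}}$ and so needs $t$ generators. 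Neither ``absorbing a generator into $N$'' nor ``Liebeck bookkeeping'' repairs this. Fortunately nothing obliges you to match the $t-1$ occurring in $T(p,t)$: since $I$ is infinite, any finite number of copies of $C_{p^k}$ embeds into $\sum_{i\in I}C_{p^k}$, so run the argument with $t$ copies and it closes. With that correction your proof is complete, though noticeably longer than the paper's discrimination argument.
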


\begin{proof}
Assume $B[p^k]/B[p^{k-1}]$ is finite and is of order $p^{\,\mu}$\!, according to~(\ref{TheFormOfB[]/B[]}). If $t\le\mu$, then all powers of $p$ on the right side of~(\ref{ValueForLambda}) are equal to $p^k$. But when $t$ becomes greater than $\mu$, then the
later summands in sum $\sum_{i=1}^t (p^{k_i}-1)$ become less or equal to $p^{k-1}-1$. Thus for
sufficiently large $t$:
\begin{align*}
	\nu(p,t)- \lambda(A,B,t) &> \sum_{i=1}^{t-1}(p^k-1) 
	-\left( \sum_{i=1}^{\mu}(p^k-1)+ \sum_{i=\mu +1}^{t}(p^{k-1}-1)\right)\\
	&= \sum_{i=\mu +1}^{t-1}\left[(p^k-1)-(p^{k-1}-1)\right] - (p^{k-1}-1)\\
	&= (t -1 -\mu)(p^k-p^{k-1})+1 - p^{k-1}.
\end{align*}
So taking an arbitrary positive integer $t_0> (p^{k-1}-1)/(p^k-p^{k-1})+\mu +1$ we get that 
$	\nu(p,t_0)- \lambda(A,B,t_0) > 0$.

\vskip2mm
Assume now $B[p^k]/B[p^{k-1}]$ is infinite, that is, $\mu$ is an infinite ordinal 
in~(\ref{TheFormOfB[]/B[]}). Thus $B$ contains a subgroup $D$ isomorphic with an infinite direct
power of cycle $C_{p^k}$. $D$ is a discriminating group in the variety $\A_{p^k}$~\cite{B+3N}.
Therefore by Lemma~\ref{D<QS(D_1)} the group $B$ itself is a discriminating group for the variety 
$\A_{p^k}$ and so $\var{A \Wr B}=\A_{p^u}\cdot \A_{p^k}$.
\end{proof}
Now let us summarise:

\begin{theorem}
\label{p-groups}
Let $A$ and $B$ be arbitrary abelian $p$-groups. Then:
\begin{enumerate}
\item if at least one of the groups $A$ and $B$ is not of finite exponent, then $\var{A\Wr B}=\var{A\Wrr B}=\var{A} \cdot \var{B}$,
\item if $A$ and $B$ are groups of finite exponents $p^u$ and $p^k$ respectively
then\hfil\break 
$\var{A\Wr B}=\var{A\Wrr B}=\var{A} \cdot \var{B}=\A_{p^u}\cdot \A_{p^k}$ if and only if in
direct decomposition~(\ref{TheFormOfB}) of $B$ infinitely many cycles of order $p^k$ are
present, that is, if the factor group $B[p^k]/B[p^{k-1}]$ is infinite.
\end{enumerate}
\end{theorem}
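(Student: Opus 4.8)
The plan is to assemble Theorem~\ref{p-groups} from the results already established in Sections~\ref{The_cases_of_non-finite_exponent}--\ref{The_case_of_abelian_$p$-groups}; essentially no new computation is needed, only a careful record of which earlier statement covers each case. Throughout I use that the cartesian and direct wreath products $A\Wr B$ and $A\Wrr B$ generate the same variety (\cite[22.31, 22.32]{HannaNeumann}), so it is enough to argue with $A\Wr B$. Part (1) is then immediate: if $A$ or $B$ is not of finite exponent, Theorem~\ref{InfiniteExpAorB} already gives $\var{A\Wr B}=\var{A}\cdot\var{B}$, which is exactly the assertion.

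For part (2), assume $A$ and $B$ have finite exponents $p^u$ and $p^k$. First I would pin down the varieties involved. Since $A$ is abelian of exponent $p^u$, it contains a cyclic subgroup of order $p^u$, so $\A_{p^u}\sub\var A$; conversely $\var A\sub\A_{p^u}$ because $A$ is abelian and satisfies $x^{p^u}\equiv 1$. Hence $\var A=\A_{p^u}$ and likewise $\var B=\A_{p^k}$, so $\var A\cdot\var B=\A_{p^u}\cdot\A_{p^k}$. Moreover the inclusion $\var{A\Wr B}\sub\var A\cdot\var B$ holds automatically, since $A\Wr B$ is an extension of its base group $A^B\in\var A$ by $B\in\var B$. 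Thus part (2) reduces entirely to deciding when the reverse inclusion $\A_{p^u}\cdot\A_{p^k}\sub\var{A\Wr B}$ holds, and this is precisely what Lemma~\ref{nu>mu} settles.

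Concretely: if $B[p^k]/B[p^{k-1}]$ is infinite, the second assertion of Lemma~\ref{nu>mu} gives $\var{A\Wr B}=\A_{p^u}\cdot\A_{p^k}$ outright, via the infinite direct power of $C_{p^k}$ sitting inside $B$ as a discriminating subgroup together with Lemma~\ref{D<QS(D_1)}. If instead $B[p^k]/B[p^{k-1}]$ is finite, the first assertion of Lemma~\ref{nu>mu} produces, for all sufficiently large $t$, the group $T(p,t)=C_{p^u}\Wrr\sum_{i=1}^{t-1}C_{p^k}$ of nilpotency class $\nu(p,t)>\lambda(A,B,t)$, so $T(p,t)\notin\var{A\Wr B}$; but $T(p,t)$, being a wreath product and hence an extension of a group in $\A_{p^u}$ by a group in $\A_{p^k}$, lies in $\A_{p^u}\cdot\A_{p^k}=\var A\cdot\var B$. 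Therefore $\var{A\Wr B}\not=\var A\cdot\var B$ in this case, which completes the equivalence and hence the theorem.

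The main obstacle is essentially absent at this stage: the genuinely hard work — the upper bound for $\lambda(A,B,t)$ in~(\ref{ValueForLambda}) via Liebeck's nilpotency-class formula, the discrimination argument, and the example $T(p,t)$ — was carried out in the preceding subsections. The one point that still needs a line of care is the legitimacy of using $\lambda(A,B,t)$ as a bound in the ``only if'' direction: one must invoke that the $t$-generated groups of $\var{A\Wr B}$ lie in the variety generated by the $t$-generated subgroups of $A\Wr B$ (\cite[16.31]{HannaNeumann}), exactly as observed after Definition~\ref{lambda(A,B,t)}, so that the strict inequality $\nu(p,t)>\lambda(A,B,t)$ genuinely forces $T(p,t)\notin\var{A\Wr B}$.
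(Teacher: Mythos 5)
Your proposal is correct and follows the paper's own route exactly: the paper proves Theorem~\ref{p-groups} simply by summarising Theorem~\ref{InfiniteExpAorB} for part (1) and Lemma~\ref{nu>mu} for part (2), which is precisely your assembly (including the observation that $\var A=\A_{p^u}$, $\var B=\A_{p^k}$ and that $T(p,t)\in\A_{p^u}\cdot\A_{p^k}$ witnesses the failure in the finite case). No gaps.
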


\begin{example}
\label{nice_example}
Applying this result, we easily find an answer to the question asked at the beginning of this 
section: the group 
$
C_p \Wr (C_{p^2}\oplus \sum_{i=1}^{\infty}C_p)
$
does {\it not} generate the variety $\A_{p}\cdot \A_{p^2}$ because decomposition of the 
\lq\lq active\rq\rq\  group of this wreath product contains only one direct summand of order
$p^2=p^k$.
\end{example}

\section{The case of abelian groups of finite composite exponents}  
\label{groups_of_finite_composite_exponents}

\subsection{$p$-groups in variety $\var{A\Wr B}$}
Assume $\exp A =m$, $\exp B =n$ and denote for a given prime $p$ (not necessarily dividing both 
$m$ and $n$) by $A_{p}$ and $B_{p}$ the $p$-primary components of $A$ and $B$ respectively. 

\begin{lemma}
\label{TechnivalResult}
Using this notation:
\begin{equation}
\label{TechnivalResultsFormule}
\var{A \Wr B}\,\cap\, \A_{p^u} \cdot \A_{p^k}= \var{A_p \Wr B_p},
\end{equation}
where $u=k(A,p)$, $k=k(B,p)$.
\end{lemma}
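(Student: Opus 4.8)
The plan is to prove the two inclusions separately. The inclusion $\var{A_p \Wr B_p} \subseteq \var{A \Wr B}\,\cap\, \A_{p^u} \cdot \A_{p^k}$ should be the easy one. Since $A_p$ is a direct summand, hence a subgroup, of $A$ and $B_p$ is a subgroup of $B$, Lemma \ref{karevor} gives $A_p \Wr B_p \in \var{A \Wr B}$, so $\var{A_p \Wr B_p}\subseteq \var{A\Wr B}$. On the other hand $A_p$ has exponent $p^u$ and $B_p$ has exponent $p^k$ by the definitions of $u=k(A,p)$ and $k=k(B,p)$, so $A_p \Wr B_p$ is an extension of a group in $\A_{p^u}$ by a group in $\A_{p^k}$, whence $A_p \Wr B_p \in \A_{p^u}\cdot\A_{p^k}$ and this product variety, being a variety, contains the variety it generates. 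Intersecting the two memberships gives the first inclusion.

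For the reverse inclusion, the plan is to use the Birkhoff realization $\var{A\Wr B} = \mathsf{QSC}(A\Wr B)$ together with the structure of the intersection with $\A_{p^u}\cdot\A_{p^k}$. The key point is that $A = A_p \oplus A_{p'}$ and $B = B_p \oplus B_{p'}$, where $A_{p'}, B_{p'}$ denote the (direct sums of the) primary components for primes other than $p$; these complementary parts have orders on which $p$ acts invertibly. The natural idea is: an arbitrary group $G \in \var{A\Wr B}\cap \A_{p^u}\cdot\A_{p^k}$ arises as $G \in \mathsf{QSC}(A\Wr B)$, so it is a quotient of a subgroup of a cartesian power $(A\Wr B)^I$. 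Since $G$ also lies in $\A_{p^u}\cdot\A_{p^k}$, it has a normal subgroup $N$ with $N \in \A_{p^u}$ and $G/N \in \A_{p^k}$, so $G$ is a finite $p$-group "locally" — more precisely every finitely generated subgroup of $G$ is a finite $p$-group of bounded class. I would then argue that the $p'$-parts of the base group and top group of $A\Wr B$ contribute nothing: the retraction $A\Wr B \twoheadrightarrow A_p \Wr B$ killing $A_{p'}^B$, followed by $A_p\Wr B \twoheadrightarrow A_p \Wr B_p$ using the abelian-quotient part of Lemma \ref{karevor} (the second statement of Lemma \ref{X*WrY_belongs_var}, valid since $A_p$ is abelian), shows $A_p\Wr B_p$ generates the "$p$-part" of $\var{A\Wr B}$. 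Formally, one shows $\var{A\Wr B}$ and $\var{A_p\Wr B_p}\cdot\W$ for the complementary variety $\W$ coming from the $p'$-components have a join decomposition, and intersecting with $\A_{p^u}\cdot\A_{p^k}$ — a variety of $p$-groups — annihilates the $\W$-part.

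Concretely I expect the cleanest route is: show $\var{A\Wr B} \subseteq \var{A_p \Wr B_p} \vee \V'$ where $\V'$ is a variety all of whose finitely generated members have order prime to $p$ up to bounded $p$-part, then invoke that for varieties $\U_1, \U_2$ with $\U_2$ containing no nontrivial $p$-group one has $(\U_1\vee\U_2)\cap \mathcal{P} = \U_1 \cap \mathcal{P}$ for $\mathcal P$ a variety of $p$-groups; apply with $\U_1 = \var{A_p\Wr B_p}$, $\mathcal P = \A_{p^u}\cdot\A_{p^k}$, and use the first inclusion to pin down $\U_1\cap\mathcal P = \var{A_p\Wr B_p}$ (here one needs $\var{A_p\Wr B_p}\subseteq \A_{p^u}\cdot\A_{p^k}$, already shown). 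The main obstacle, I expect, is the decomposition step: justifying that $\var{A\Wr B}$ splits as a join controlled by the primary components — this requires handling the interaction of the base group $A^B$ (a cartesian, not direct, power) with the top group $B$, and checking that the cartesian power $A^B = (A_p \oplus A_{p'})^B$ decomposes compatibly with the $B$-action so that the wreath product itself decomposes as a subdirect product of $A_p\Wr B$-type and $A_{p'}\Wr B$-type pieces. Once that structural decomposition is in hand, the variety-theoretic bookkeeping with joins and intersections is routine.
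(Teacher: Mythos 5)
Your first inclusion is correct and is exactly the paper's. The reverse inclusion, however, is built on an intermediate claim that is false. You propose to establish $\var{A\Wr B}\subseteq \var{A_p\Wr B_p}\vee \V'$ with $\V'$ a ``complementary'' variety contributing no nontrivial $p$-groups, and then intersect with $\A_{p^u}\cdot\A_{p^k}$. For your key lemma $(\U_1\vee\U_2)\cap\mathcal P=\U_1\cap\mathcal P$ to apply, $\V'$ must genuinely contain no nontrivial $p$-group (the hedge ``up to bounded $p$-part'' destroys the lemma: any $p$-group sitting in $\V'$ survives the intersection with $\mathcal P$). But then no such $\V'$ exists. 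Take $A=B=C_p\oplus C_q$ with $q\neq p$: here $\var{A_p\Wr B_p}=\var{C_p\Wr C_p}$ is a variety of nilpotent $p$-groups, and every finitely generated group in $\var{C_p\Wr C_p}\vee\V'$ is a quotient of a subgroup $R$ of a product $D_1\times D_2$ with $D_1$ a $p$-group and $D_2$ a $p'$-group; since every element of $R$ is the product of its $p$-part (lying in $D_1\times\1$) and its $p'$-part (lying in $\1\times D_2$), both of which are powers of it and hence in $R$, the group $R$ --- and therefore every finitely generated group of the join --- splits as (nilpotent $p$-group) $\times$ ($p'$-group). Yet $\var{A\Wr B}$ contains $C_p\Wr C_q$, hence by Houghton's theorem all of $\A_p\cdot\A_q$, hence the non-nilpotent group $C_p\Wrr C_q$, which admits no such splitting. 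The primes are mixed non-nilpotently inside $A\Wr B$, so the $p$-part cannot be split off at the level of the whole variety; the step you yourself flag as ``the main obstacle'' is not merely unproved, it cannot be carried out in the form you state.

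What rescues the argument is doing the separation group-by-group rather than variety-by-variety, and the device that makes it work is a Sylow subgroup. Since $\A_m\cdot\A_n$ is locally finite, a $p$-group $P$ in the left-hand side of~(\ref{TechnivalResultsFormule}) may be taken finite; then $P\in{\sf QSC}(R_1,\ldots,R_l)$ for finitely many finite subgroups $R_i$ of $A\Wr B$, so $P$ is an image of a subgroup $R\le R_1\times\cdots\times R_l$, and, being a $p$-group, it is already an image of a Sylow $p$-subgroup $P^*$ of $R$. Projecting $P^*$ onto each $R_i$ and intersecting with the base group $A^B$, one finds that the intersection is a $p$-subgroup of $A^B$, hence lies in $(A_p)^B$ and has exponent dividing $p^u$, while the quotient is a $p$-subgroup of $B$, hence lies in $B_p$; Kaloujnine--Krasner then embeds each projection into a wreath product belonging to $\var{A_p\Wr B_p}$ by Lemma~\ref{karevor}. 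It is precisely the passage to the Sylow $p$-subgroup before applying Kaloujnine--Krasner that discards the $q$-contamination your join decomposition was meant to remove; some equivalent of this step is indispensable.
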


\begin{proof}
The right side of~(\ref{TechnivalResult}) lies in the left side. So it is sufficient to prove 
that every $p$-group  $P$ of  $\var{A \Wr B}$ belongs to $\var{A_p \Wr B_p}$. Moreover, since
$\var{A \Wr B}$ is a locally finite variety, we can assume $P$ to be a {\it finite}
group~\cite{HannaNeumann}. We omit the trivial case, when $p$ is coprime with $m$ or with $n$,
that is, when $A_p=\1$ or $B_p=\1$. The variety $\var{A \Wr B}$ is generated by the set $\{ R_i
\,|\,i\in I \}$ of all finite subgroups of $A \Wr B$. So there is a finite subset
$\{R_1,\ldots, R_l \}$ of this set, such that $P\in \var{R_1,\ldots, R_l}$~\cite{HannaNeumann}
and:
$$
	P\in {\sf QSC}(R_1,\ldots, R_l).
$$
That is, $P$ is a surjective image of a subgroup $R$ of a direct product $R_1 \times \ldots \times 
R_l$ under some homomorphism $\varphi: R\to P$:
$$
	P=\varphi (R), \quad R \le R_1 \times \ldots \times R_l.
$$
$P$ is a $p$-group and, thus, is an image of {\it some} Sylow $p$-subgroup $P^*$ of $R$. In 
turn $P^*$ is a sub-direct product of its projections $P^*_i$ on $R_i$,\,\,
$i=1,\ldots,l$. Denote by $P^*_{i,B}$ the intersection of $P^*_{i}$ with the base group $A^B$
of $A\Wr B$. Clearly, $P^*_{i,B}$ is normal in $P^*_{i}$ and the factor group
$P^*_{i}/P^*_{i,B}$ is isomorphic to some subgroup $P^*_{i,A}$ of the factor group $(A\Wr B)/A^B
\cong A$. Thus $P^*_i$ is isomorphically embeddable into the wreath product $P^*_{i,A} \Wr
P^*_{i,B}$. The group $P^*_{i,B}$ is a $p$-subgroup of $B$ and, thus, lies in the $B_p$.
$P^*_{i,A}$ is a $p$-subgroup of the base group $A^B$ and, thus, lies in cartesian power
$(A_p)^B$ of $A_p$. So $P^*_{i,A}\in \var{A_p}$ and, according to Lemma~\ref{karevor}, 
$$
P^*_i \le P^*_{i,A} \Wr P^*_{i,B} \in \var{{A_p}\Wr {B_p}}.
$$
\end{proof}

\subsection{Wreath products of abelian groups of finite composite exponents}

Assume $\exp A =m$, $\exp B =n$ as above and
$$
A=A_{p_1}\oplus \cdots \oplus A_{p_s}, \quad 
B=B_{q_1}\oplus \cdots \oplus B_{q_d}
$$
are direct decompositions of $A$ and $B$ as direct sums of finitely many primary components 
$A_{p_i}$, $i=1,\ldots,s$ \,\, and\,\, $B_{p_j}$, $j=1,\ldots,d$.

Let us begin with the case when the set $\{ q_1,\ldots,q_d\}$ is a subset of 
$\{ p_1,\ldots,p_s\}$. Assume $p_1=q_1$ and denote for brevity $p=p_1=q_1$. Assume further that
$\exp{A_p}=p^u$ and $\exp{B_p}=p^k$.

\begin{lemma}
\label{TheCasem|n}
If $A$, $B$, $p$ are as above, then
$$
	\var{A\Wr B}= \var{A} \cdot \var{B} = \A_{m}\cdot \A_{n}
$$
if and only if for each $p$ dividing $n$ the factor $B[p^k]/B[p^{k-1}]$, where $k=k(B,p)$, is 
infinite.
\end{lemma}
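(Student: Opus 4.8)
The plan is to establish the two implications of the ``if and only if'' separately, reusing the machinery already assembled in this section and in the section on abelian $p$-groups.

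The ``only if'' direction is short. Suppose $\var{A\Wr B}=\A_m\cdot\A_n$ and, arguing by contradiction, suppose that for some prime $p\mid n$ the factor $B[p^k]/B[p^{k-1}]$ is finite, where $k=k(B,p)$; put $u=k(A,p)$. Because every prime dividing $n$ divides $m$, we have $u\geq 1$, so $p^u\mid m$ and $p^k\mid n$, and hence $\A_{p^u}\cdot\A_{p^k}\sub\A_m\cdot\A_n$ by monotonicity of the varietal product. By Lemma~\ref{TechnivalResult}, $\var{A\Wr B}\cap(\A_{p^u}\cdot\A_{p^k})=\var{A_p\Wr B_p}$, while Theorem~\ref{p-groups}, applied to the abelian $p$-groups $A_p$ and $B_p$ of exponents $p^u$ and $p^k$, says that finiteness of $B[p^k]/B[p^{k-1}]$ forces $\var{A_p\Wr B_p}\neq\A_{p^u}\cdot\A_{p^k}$; since $\var{A_p\Wr B_p}\sub\A_{p^u}\cdot\A_{p^k}$ in any case, this inclusion is strict. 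Thus $\A_{p^u}\cdot\A_{p^k}$, and a fortiori $\A_m\cdot\A_n$, is not contained in $\var{A\Wr B}$ --- a contradiction.

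For the ``if'' direction, assume every factor $B[p^k]/B[p^{k-1}]$ with $p\mid n$ and $k=k(B,p)$ is infinite. The decisive step is an exponent-free identity collapsing the whole product variety to a single explicit group,
$$
	\A_m\cdot\A_n=\var{\,C_m\Wr \bigoplus_{i\in\N}C_n\,}.
$$
The inclusion $\cont$ holds because the base group of $C_m\Wr\bigoplus_{i\in\N}C_n$ is a direct power of $C_m\in\A_m$ and its top group is $\bigoplus_{i\in\N}C_n\in\A_n$. For the inclusion $\sub$ it suffices, $\A_m\cdot\A_n$ being locally finite, to show that every finite $G\in\A_m\cdot\A_n$ lies in this variety; writing such a $G$ as an extension of a finite abelian group $N$ of exponent dividing $m$ by a finite abelian group of exponent dividing $n$, the Kaloujnine--Krasner theorem embeds $G$ into $N\Wr(G/N)$, hence into $C_m^{\,l}\Wr C_n^{\,t}$ for suitable $l,t$ (a finite abelian group of exponent dividing $e$ being a subgroup of a finite direct power of $C_e$), and finally $C_n^{\,t}\leq\bigoplus_{i\in\N}C_n$ together with the cartesian-power clause of Lemma~\ref{karevor} gives $C_m^{\,l}\Wr C_n^{\,t}\in\var{C_m\Wr\bigoplus_{i\in\N}C_n}$. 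It then remains to embed $C_m\Wr\bigoplus_{i\in\N}C_n$ into $A\Wr B$. Since $\exp A=m$, the group $A$ contains an element of order $m$, hence a copy of $C_m$; and by Pr\"ufer's theorem each $p$-component $B_p$ has the form~(\ref{TheFormOfB}), so the hypothesis that $B[p^k]/B[p^{k-1}]$ is infinite means, via~(\ref{TheFormOfB[]/B[]}), that infinitely many cyclic summands of $B_p$ have order $p^{k(B,p)}$; summing over $p\mid n$ and using $n=\prod_{p\mid n}p^{k(B,p)}$ gives $\bigoplus_{i\in\N}C_n\leq B$. Therefore $C_m\Wr\bigoplus_{i\in\N}C_n\leq A\Wr B$, so $\A_m\cdot\A_n=\var{C_m\Wr\bigoplus_{i\in\N}C_n}\sub\var{A\Wr B}$, while the reverse inclusion $\var{A\Wr B}\sub\A_m\cdot\A_n=\var A\cdot\var B$ is automatic, $A\Wr B$ being an extension of a group of $\A_m=\var A$ by one of $\A_n=\var B$. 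Hence $\var{A\Wr B}=\var A\cdot\var B=\A_m\cdot\A_n$.

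The step I expect to be the main obstacle is the identity $\A_m\cdot\A_n=\var{C_m\Wr\bigoplus_{i\in\N}C_n}$: one must take care that it is enough to treat finitely generated (hence finite) members of the locally finite variety $\A_m\cdot\A_n$, route the Kaloujnine--Krasner embedding and the cartesian-power clause of Lemma~\ref{karevor} correctly, and fit the finite powers $C_m^{\,l}$ and $C_n^{\,t}$ inside $A$ and $B$. Everything else is routine. (One could instead prove the ``if'' direction by verifying that $\bigoplus_{i\in\N}C_n$ discriminates $\A_n$; then the inclusion $\bigoplus_{i\in\N}C_n\leq B$ makes $B$ discriminate $\A_n$ by Lemma~\ref{D<QS(D_1)}, and $A\Wr B$ discriminates $\var A\cdot\A_n$ exactly as in the proofs of Lemmas~\ref{InfiniteExpB} and~\ref{nu>mu} --- but establishing discrimination of $\A_n$ for composite $n$ requires essentially the same analysis of $B$.)
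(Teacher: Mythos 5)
Your argument is correct, and the two halves relate to the paper's proof differently. The ``only if'' direction is essentially the paper's: both pass through Lemma~\ref{TechnivalResult} to reduce to $\var{A_p\Wr B_p}=\A_{p^u}\cdot\A_{p^k}$ and then invoke the $p$-group criterion (Lemma~\ref{nu>mu}/Theorem~\ref{p-groups}) to force $\abs{B[p^k]/B[p^{k-1}]}=\infty$. In the ``if'' direction both proofs first extract the same subgroup $D=\sum_{i=1}^{\infty}C_n\le B$ from the hypothesis (one infinite family of summands $C_{q_j^{k_j}}$ per prime $q_j\mid n$, reassembled into copies of $C_n$), but then diverge: the paper appeals to the discrimination machinery --- $D$ discriminates $\A_n$ by~\cite{B+3N}, hence $B$ discriminates $\A_n$ by Lemma~\ref{D<QS(D_1)}, hence $A\Wr B$ discriminates (in particular generates) $\A_m\cdot\A_n$ --- whereas you prove generation directly, establishing $\A_m\cdot\A_n=\var{C_m\Wr\sum_{i=1}^{\infty}C_n}$ by local finiteness of $\A_m\cdot\A_n$, the Kaloujnine--Krasner embedding of a finite member into $C_m^{\,l}\Wr C_n^{\,t}$, and the cartesian-power and subgroup clauses of Lemma~\ref{karevor}, and then embedding $C_m\Wr\sum_{i=1}^{\infty}C_n$ into $A\Wr B$ (the existence of an element of order exactly $m$ in $A$ is correctly guaranteed by $\exp A=m$). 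The paper's route is shorter given the tools already on the table and yields the strictly stronger conclusion that $A\Wr B$ \emph{discriminates} $\A_m\cdot\A_n$; your route is more self-contained, avoiding the theorem of~\cite{B+3N} on wreath products with discriminating top groups, and isolates the reusable identity $\A_m\cdot\A_n=\var{C_m\Wr\sum_{i=1}^{\infty}C_n}$. Both are sound.
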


\begin{proof}
If $A\Wr B$ generates $\A_m \cdot \A_n$, then according to Lemma~\ref{TechnivalResult}:
$$
	\var{A_p \Wr B_p} = \var{A \Wr B} \cap  \, \A_{p^u} \cdot \A_{p^k} = \A_{p^u} \cdot \A_{p^k}.
$$
Therefore the group $B_p$ must satisfy the condition of Lemma~\ref{nu>mu}:
$$
	\abs{B_p[p^{k(B_p,p)}]/B_p[p^{k(B_p,p)-1}]}=\infty. 
$$
But, since $B_p$ is the $p$-primary component of $B$, we have:
$$
	k(B_p,p)=k(B,p)=k \quad {\rm and} \quad B_p[p^{k(B_p,p)}]=B_p[p^{k}]=B[p^{k}].
$$
\vskip2mm
Now assume, on the other hand, that for all $q_1,\ldots,q_d$ all factors 
$$
B[q_1^{k_1}]/B[q_1^{{k_1}-1}]\, ,\ldots, 
B[q_1^{k_d}]/B[q_d^{{k_d}-1}]\, ,
$$
where $k_i=k(B_{q_i}, {q_i})$ \,\,($i=1,\ldots,d$)
are infinite. Since the cycle $C_n$ is the direct sum of cycles $C_{q^{k_1}},\ldots,C_{q^{k_d}}$, 
we get that $B$ contains the infinite direct power $D$ of cycle $C_n$. And since $D$
discriminates $\A_n$, the group $B$ also  discriminates $\A_n$ and $A\Wr B$ discriminates
$\A_m\cdot\A_n$.
\end{proof}

It remains to consider the case when the exponent $n$ of $B$ has a prime divisor $p$ which does
 {\it not}\, divide the exponent $m$ of $A$. Let 
$$
	m=p_1^{u_1}\cdots p_s^{u_s}
	\quad {\rm and} \quad
	n=q_1^{k_1}\cdots q_{s'}^{k_{s'}} q_{s'+1}^{k_{s'+1}}\cdots 	q_{d}^{k_{d}},
$$
where primes $p_i$ and $q_j$ are arranged such that $p_1=q_1, \ldots , \,\, p_{s'}=q_{s'}$ and 
$p_i\not= q_j$ for all $i=1,\ldots,s$;\,\, $j=s'+1,\ldots,d$. Then $B$ has a decomposition $B_1
\oplus B_2$, where $n_1=\exp{B_1}=q_1^{k_1}\cdots q_{s'}^{k_{s'}}$ and
$n_2=\exp{B_2}=q_{s'+1}^{k_{s'+1}}\cdots q_{d}^{k_{d}}$.

\begin{lemma}
In the above notation the wreath product $A \Wr B$ generates the variety $\A_m\cdot\A_n$ if and only 
if the wreath product $A \Wr B_1$ generates the variety $\A_m\cdot\A_{n_1}=\A_m\cdot \var{B_1}$.
\end{lemma}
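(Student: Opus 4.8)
The plan is to prove the two implications separately, using throughout the two sub-wreath-products $A\Wr B_1$ and $A\Wr B_2$ sitting inside $A\Wr B$ (whence $\var{A\Wr B}\supseteq\var{A\Wr B_1}\vee\var{A\Wr B_2}$), the standing inclusion $\var{A\Wr B}\subseteq\var A\cdot\var B=\A_m\cdot\A_n$, and the fact that $A$ (resp.\ $B$) being abelian of exponent $m$ (resp.\ $n=n_1n_2$ with $\gcd(n_1,n_2)=1$) contains cyclic subgroups $C_m\le A$, $C_{n_1}\le B_1$, $C_{n_2}\le B_2$. Since $\gcd(m,n_2)=1$, Houghton's criterion in the form of Lemma~\ref{FiniteAB} gives $\var{C_m\Wr C_{n_2}}=\A_m\cdot\A_{n_2}$, and then Lemma~\ref{karevor} yields $\A_m\cdot\A_{n_2}\subseteq\var{A\Wr B}$; so the $\pi_2$-primes (those of $n$ not dividing $m$) are ``for free''.

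\emph{Direction $\var{A\Wr B}=\A_m\cdot\A_n\Rightarrow\var{A\Wr B_1}=\A_m\cdot\A_{n_1}$.} Fix a prime $p$ dividing both $m$ and $n$, i.e.\ dividing $n_1$, and put $u=k(A,p)$, $k=k(B,p)$. Intersecting the assumed equality with $\A_{p^u}\cdot\A_{p^k}$ and using Lemma~\ref{TechnivalResult} one obtains
$\var{A_p\Wr B_p}=(\A_m\cdot\A_n)\cap(\A_{p^u}\cdot\A_{p^k})=\A_{p^u}\cdot\A_{p^k}$,
the last equality being trivial since $\A_{p^u}\subseteq\A_m$ and $\A_{p^k}\subseteq\A_n$. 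By Theorem~\ref{p-groups}(2) this forces $B_p[p^k]/B_p[p^{k-1}]=B[p^k]/B[p^{k-1}]$ to be infinite. As this holds for every common prime $p$, and every prime dividing $n_1$ divides $m$, Lemma~\ref{TheCasem|n}, applied to $A$ and $B_1$ (whose exponent $n_1$ has all its prime divisors dividing $m$, and for which $k(B_1,p)=k(B,p)$ and $B_1[p^k]=B[p^k]$ at each common $p$), gives $\var{A\Wr B_1}=\A_m\cdot\A_{n_1}$.

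\emph{Direction $\var{A\Wr B_1}=\A_m\cdot\A_{n_1}\Rightarrow\var{A\Wr B}=\A_m\cdot\A_n$.} By Lemma~\ref{TheCasem|n} again, $B[p^k]/B[p^{k-1}]$ is infinite for every common prime $p$, so the primary component $B_p=(B_1)_p$ contains infinitely many summands $C_{p^k}$, and assembling these over the (finitely many) common primes shows that $B_1$ contains a copy of $\bigoplus_{\aleph_0}C_{n_1}$. It remains to prove $\A_m\cdot\A_n\subseteq\var{A\Wr B}$, and since $\A_m\cdot\A_n$ is locally finite it suffices to show every finite $G\in\A_m\cdot\A_n$ lies in $\var{A\Wr B}$. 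Embed $G$, by Kaloujnine--Krasner, into $\bar N\Wr H$ with $\bar N\in\A_m$ and $H\in\A_n$ finite, split $H=H_1\times H_2$ into its $\pi_1$- and $\pi_2$-primary parts, and use the standard embedding $\bar N\Wr(H_1\times H_2)\le(\bar N\Wr H_1)\Wr H_2$. Put $P=\bar N\Wr H_1$. Since $\bar N\in{\sf QSC}(A)$ and $H_1$, being a finite abelian $\pi_1$-group of exponent dividing $n_1$, embeds in $\bigoplus_{\aleph_0}C_{n_1}\le B_1$, Lemma~\ref{X*WrY_belongs_var} gives $P\in\var{A\Wr B_1}=\A_m\cdot\A_{n_1}$; in particular $P$ is a finite group whose order involves only primes dividing $m$, hence is coprime to $\exp H_2\mid n_2$. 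Thus $G$ is a subgroup of a split, coprime wreath product $P\Wr H_2$ with $P\in\var{A\Wr B}$ and $H_2\in\A_{n_2}$.

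\emph{The main obstacle} is precisely the last step: to pass from ``$G\le P\Wr H_2$ with $P\in\A_m\cdot\A_{n_1}$'' to ``$G\in\var{A\Wr B}$''. Naively $P\Wr H_2$ only yields membership in $\A_m\cdot\A_{n_1}\cdot\A_{n_2}$, which is strictly larger than $\A_m\cdot\A_n$, so one must genuinely exploit that $n_2$ is coprime to $m$ (and to $n_1$) together with the local finiteness of $\A_m\cdot\A_n$ and the $p$-local information already secured (Lemma~\ref{TechnivalResult}, Theorem~\ref{p-groups}); concretely, one needs to ``move'' the $\pi_2$-group $H_2$ past the $\A_m\cdot\A_{n_1}$-extension $P$ without leaving $\var{A\Wr B}$, and this is where the coprimality is indispensable. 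An equivalent route, via the Corollary in Section~\ref{QSC}, is to verify directly that $A\Wr(\prod_I B)\in\var{A\Wr B}$ for every $I$: writing $\prod_IB=(\prod_IB_1)\times(\prod_IB_2)$, the hypothesis persists for $\prod_IB_1$ (so $A\Wr(\prod_IB_1)\in\var{A\Wr B_1}\subseteq\var{A\Wr B}$), and for infinite $I$ the factor $\prod_IB_2$ contains $\bigoplus_{\aleph_0}C_{n_2}$, so $\prod_IB$ contains the discriminating group $\bigoplus_{\aleph_0}C_n$ of $\A_n$; the difficult core is then the same gluing of the $\pi_1$- and $\pi_2$-behaviour of the active group. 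I expect this gluing step to be the one requiring real work, the rest being bookkeeping with $\sf Q,S,C$ and with the already-established results.
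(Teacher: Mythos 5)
Your forward direction is sound and coincides with the paper's: intersect the assumed equality with $\A_{p^u}\cdot\A_{p^k}$, apply Lemma~\ref{TechnivalResult} and Theorem~\ref{p-groups}(2) to conclude $|B[p^k]/B[p^{k-1}]|=\infty$ for each common prime $p$, and feed this back into Lemma~\ref{TheCasem|n} for the pair $(A,B_1)$.

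The converse direction, however, is not a proof: you have correctly located the difficulty but you have not resolved it, and the step you leave open is exactly the mathematical content of the lemma. Passing from $G\le P\Wr H_2$ with $P\in\A_m\cdot\A_{n_1}$ only places $G$ in $\A_m\cdot\A_{n_1}\cdot\A_{n_2}$, which strictly contains $\A_m\cdot\A_n$, and no amount of bookkeeping with ${\sf Q}$, ${\sf S}$, ${\sf C}$ closes that gap. The missing idea is the paper's reduction to critical groups: since $\A_m\cdot\A_n$ is locally finite, it is generated by its critical groups, so it suffices to show that every critical $Q\in\A_m\cdot\A_n$ lies in $\var{A\Wr B^*}$, where $B^*=B_1^*\oplus C_{n_2}$ and $B_1^*\le B_1$ is the infinite direct sum you already produced. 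Writing $Q$ as an extension of $H\in\A_m$ by $G=G_1\oplus G_2$ with $G_i\in\A_{n_i}$, the monolith $M$ of $Q$ is an elementary abelian $p$-group for some $p$ dividing $m$, conjugation gives an irreducible $\F_p G$-module, and Clifford's theorem applied to the normal subgroup $G_2$ (whose order is coprime to $p$), together with the observation that the homogeneous components are normal in $Q$ because $H$ and $Q/H$ are both abelian, shows $M$ is $G_2$-homogeneous; faithfulness of the $G_2$-action follows from monolithicity, and an abelian group with a faithful irreducible representation is cyclic. Hence $G_2$ embeds in the \emph{single} cycle $C_{n_2}\le B_2$, so $G\le B^*$ and $Q\le H\Wr G\le H\Wr B^*\in\var{A\Wr B}$ by Kaloujnine--Krasner and Lemma~\ref{karevor}. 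This is the ``gluing'' you anticipated: it is achieved not by moving $H_2$ past $P$, but by proving that the $\pi_2$-part of the top group of every critical group of $\A_m\cdot\A_n$ is cyclic, so that one copy of $C_{n_2}$ inside $B$ suffices.
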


\begin{proof}
If $\var{A \Wr B}=\A_m\cdot\A_n$, then $\var{A \Wr B_1}=\A_m\cdot\A_{n_1}$ according to 
Lemma~\ref{TechnivalResult} and to the first part of the proof of Lemma~\ref{TheCasem|n}.

\vskip2mm
Assume, on the other hand, $\A_m\cdot\A_{n_1}=\var{A \Wr B_1}$. Then $B_1$ contains the 
direct sum $B_1^*$ of infinitely many copies of $C_{q_i^{k_i}}$ for each $i=1,\ldots,s'$:
$$
	B_1^*=\sum_{j=1}^{\infty}C_{q_1^{k_1}} \,\oplus \cdots \oplus\, 
	\sum_{j=1}^{\infty}C_{q_{s'}^{k_{s'}}} \le B_1.
$$
The groups $B_2$ contains the cycle
$$
	B_2^*= C_{n_2}\cong C_{q_{s'+1}^{k_{s'+1}}}
	\!\oplus \cdots \oplus C_{q_{d}^{k_{d}}}.
$$
Thus, according to Lemma~\ref{karevor}, it is sufficient to prove that 
$\A_m \cdot \A_n = \var{A\Wr B^*}$, where $B^*=B_1^* \oplus B_2^*$.

$\A_m \cdot \A_n$ is a locally finite variety generated by its critical 
groups~\cite{HannaNeumann}. Let $Q$ be such a critical group. $Q$ is an extension of a normal
subgroup $H\in \A_m$  by means of group $G=Q/H \in \A_n$. So $G=G_1 \oplus G_2$, where $G_1 \in
\A_{n_1}$ and $G_2 \in \A_{n_2}$. Let $M$ be the monolith of $Q$~\cite{HannaNeumann}. $M$ is a
finite direct product of, say, $r$ copies of some cycle $C_p$, $p\in \{ p_1,\ldots, p_s\}$. So
$M$ is an  $r$-dimensional space over the finite field $\F_p$ and the operation of conjugation of
elements of $M$ by elements of $Q$ defines a linear representation of the group $G$ degree $r$ over the field
$\F_p$. Since $\exp{G_2}$ is coprime with $p$ we think of this groups to be isomorphically embedded in $Q$. Let us use the same notaion $G_2$ for that isomorphic copy in $Q$.
Since $M$ is a {\it minimal} normal subgroup of $Q$, this representation of $G$ is irreducible. Let us
apply the theorem of Clifford~\cite{Curtis_Reiner} to the representation of {\it normal
subgroup} $G_2$ of $G$, that is, to $\F_p G_2$-module $M_{G_2}$. Our $\F_p G$-module $M$ is, thus, a
direct sum of its submodules, each of which is homogenous regarding $G_2$:
$M=M_1 \oplus \cdots  \oplus M_l$. Since $H$ and $Q/H$ are both abelian, $M_1,\ldots, M_l$ are 
normal in $Q$. Thus $l=1$ and $M=M_1$.

The representation of $G_2$ is {\it faithful}. For, if not, the direct sum $H \oplus K$ of $H$ 
and of non-trivial kernel $K$ would be normal in $Q$ and, so, $D$ would contain a ``second
monolith'' of $Q$. Thus, as an abelian group with irreducible faithful representation, $G_2$
has to be a {\it cycle}. Since this cycle belongs to $\A_{n_2}$, we get that $G_2$ is a
subgroup of $B_2^*$. Further, since $G_1$ is a finite group in $\A_{n_1}$, then $G_1$ is a
subgroup of $B_2^*$. Therefore $G=G_1 \oplus G_2$ is a subgroup of $B_1^* \oplus B_2^*$ and,
thus, the extension $Q$ of $H$ by $G$ lies in $H\Wr G$ and 
$$
	H\Wr G \le H\Wr B^* \in \var{A\Wr B^*}.
$$
\end{proof}

\begin{example}
\label{extended_nice_example}
Replacing the wreath product of Example~\ref{nice_example} by the following one:
$
W=C_p \Wr (C_{p^2}\oplus \sum_{i=1}^{\infty}C_p  \oplus  \sum_{i\in I}C_q)
$, where $q$ is a prime different from $p$, we get that $W$ does {\it not}\, generate the variety 
$\A_p \cdot \A_{p^2\cdot q}$ for any index set $I$, in spite of the fact that $C_p \Wr
\sum_{i\in I}C_q$ does generate the variety $\A_p \cdot \A_q$ for arbitrary non-empty index set
$I$. On the other hand, replacing in $W$ the summand $C_{p^2}$ by $\sum_{i=1}^{\infty}C_{p^2}$,
we will obtain a wreath product $C_p \Wr \left[\sum_{i=1}^{\infty}C_{p^2} \oplus
\sum_{i=1}^{\infty}C_{p}\oplus \sum_{i\in I}C_q\right]$ {\it generating} the variety $\A_p \cdot
\A_{p^2\cdot q}$ for any non-empty $I$. Clearly this wreath product will generate variety $\A_p
\cdot \A_{p^2\cdot q}$ even if we remove the summand $\sum_{i=1}^{\infty}C_{p}$ in the active
group.
\end{example}

The information of this section can be collected as:

\begin{theorem} 
\label{ArbitraryFinitem,n}
Let $A$ and $B$ be arbitrary abelian groups of finite exponents $m$ and $n$ respectively and 
let 
$
	m=p_1^{u_1}\cdots p_s^{u_s} \,\, {\rm and} \,\, n=q_1^{k_1}\cdots 	q_{s'}^{k_{s'}} 
q_{s'+1}^{k_{s'+1}}\cdots q_{d}^{k_{d}},
$
where $s'\le s$ and where the prime divisors $p_i$ and $q_j$ are grouped such that 
$p_1=q_1$,...,\,\,$p_{s'}=q_{s'}$ and $p_i\not= q_j$ for all $i=1,\ldots,s$;\,\,
$j=s'+1,\ldots,d$. Then 
$
\var{A\Wr B}=\var{A\Wrr B}=\var{A}\cdot \var{B} =\A_m \cdot \A_n
$ 
if and only if the factors $B[q^{k(B,q)}]/B[q^{k(B,q)-1}]$ are infinite for all 
$q=q_1,\ldots,q_{s'}$.
\end{theorem}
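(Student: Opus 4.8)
The plan is to deduce Theorem~\ref{ArbitraryFinitem,n} by assembling the results already established in this section. First note that the only substantive assertion is the stated equivalence: since $A$ and $B$ are abelian with $\exp A=m$ and $\exp B=n$ we have $\var A=\A_m$ and $\var B=\A_n$, the equality $\var{A\Wr B}=\var{A\Wrr B}$ holds for arbitrary groups~\cite[22.31, 22.32]{HannaNeumann}, and $A\Wr B$ is an extension of $A^B\in\A_m$ by $B\in\A_n$, so the inclusion $\var{A\Wr B}\sub\A_m\cdot\A_n$ is automatic. Thus it remains to show that $\var{A\Wr B}=\A_m\cdot\A_n$ holds precisely when all of the factors $B[q^{k(B,q)}]/B[q^{k(B,q)-1}]$, for $q=q_1,\ldots,q_{s'}$, are infinite.

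I would then split $B$ as $B=B_1\oplus B_2$ with $\exp B_1=n_1=q_1^{k_1}\cdots q_{s'}^{k_{s'}}$ and $\exp B_2=n_2=q_{s'+1}^{k_{s'+1}}\cdots q_d^{k_d}$, exactly as in the preceding reduction lemma; here $n_2$ is coprime to $m$ and every prime dividing $n_1$ also divides $m$. That lemma tells us $\var{A\Wr B}=\A_m\cdot\A_n$ if and only if $\var{A\Wr B_1}=\A_m\cdot\A_{n_1}$. (When $s'=d$ we have $B_2=\1$ and this is a tautology; when $s'=0$ we have $B_1=\1$, so $A\Wr B_1\cong A$ generates $\A_m=\A_m\cdot\A_1$, while the list $q_1,\ldots,q_{s'}$ is empty, and the theorem holds vacuously.) Since the set of active primes of $B_1$, namely $\{q_1,\ldots,q_{s'}\}$, lies inside $\{p_1,\ldots,p_s\}$, the pair $(A,B_1)$ satisfies the hypothesis of Lemma~\ref{TheCasem|n}, which gives $\var{A\Wr B_1}=\A_m\cdot\A_{n_1}$ if and only if $B_1[q^{k(B_1,q)}]/B_1[q^{k(B_1,q)-1}]$ is infinite for each prime $q$ dividing $n_1$, that is, for $q=q_1,\ldots,q_{s'}$.

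It then only remains to identify this last condition with the one in the statement. For each $q\in\{q_1,\ldots,q_{s'}\}$ the order of $B_2$ is coprime to $q$, so the $q$-primary component of $B_1$ coincides with that of $B$; hence $k(B_1,q)=k(B,q)$ and $B_1[q^{j}]=B[q^{j}]$ for all $j$, so the two quotient groups agree. Chaining the three equivalences proves the theorem. I do not anticipate any real difficulty here: the whole analytic load has already been carried in Lemma~\ref{nu>mu} (Liebeck's class formula together with the inequality $\nu(p,t)>\lambda(A,B,t)$ and the discrimination argument), in Lemma~\ref{TheCasem|n}, and in the preceding reduction lemma (the Clifford-theory step that replaces $B_2$ by a single cycle). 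The only points requiring a moment's care are the degenerate cases $s'=0$ and $s'=d$ and the harmless remark that passing from $B$ to $B_1$ does not disturb the subgroups $B[q^{k(B,q)}]$ and $B[q^{k(B,q)-1}]$.
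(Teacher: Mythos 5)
Your proposal is correct and follows exactly the route the paper intends: Theorem~\ref{ArbitraryFinitem,n} is presented there as a summary of Section~\ref{groups_of_finite_composite_exponents}, and your assembly of the splitting $B=B_1\oplus B_2$, the reduction lemma, and Lemma~\ref{TheCasem|n}, together with the observation that $k(B_1,q)=k(B,q)$ and $B_1[q^{j}]=B[q^{j}]$ for each $q\in\{q_1,\ldots,q_{s'}\}$, is precisely the intended argument. Your explicit handling of the degenerate cases $s'=0$ and $s'=d$ is a small but welcome addition that the paper leaves implicit.
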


\section{The general case of arbitrary abelian groups\\ and wreath products of groups
\lq\lq near\rq\rq\  to abelian ones}

\subsection{The general criterion for arbitrary abelian groups}

Statements of Theorems~\ref{InfiniteExpAorB}, \ref{FinitelyGenerated}, \ref{p-groups} 
and ~\ref{ArbitraryFinitem,n} are constituent parts of the following main criterion for
cartesian and direct wreath products of arbitrary abelian groups:

\begin{theorem}[{\sc Main Criterion}]
\label{MainCriterion}
For arbitrary abelian groups $A$ and $B$ their cartesian wreath product $A \Wr B$ (or direct 
wreath product $A \Wrr B$) generates the product variety $\var A \cdot \var B$ if and only if
\begin{enumerate}
\item at least one of the groups $A$ and $B$ is not of finite exponent,
\item or if $A$ and $B$ are of finite exponents $m$ and $n$ respectively and for each prime $p$ 
dividing both $m$ and $n$ the factors $B[p^k]/B[p^{k-1}]$ are infinite,
where $B[s]=\langle b\in B\,|\,b^{s}=1 \rangle$ and $p^k$ is the highest power of $p$ 
dividing~$n$.
\end{enumerate}
\end{theorem}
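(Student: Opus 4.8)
The plan is to observe that Theorem~\ref{MainCriterion} is, essentially by design, nothing more than the union of the partial results proved in Sections~\ref{The_cases_of_non-finite_exponent}--\ref{groups_of_finite_composite_exponents}; so the proof consists of a case split according to whether conclusion~(1) or~(2) is in force, together with a bookkeeping check that the hypotheses of those partial theorems translate into the self-contained conditions stated here. At the outset I would recall that $A \Wr B$ and $A \Wrr B$ generate one and the same variety (\cite[22.31, 22.32]{HannaNeumann}), so it suffices to argue for the cartesian wreath product throughout.

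\emph{Case 1: at least one of $A$, $B$ is not of finite exponent.} Here Theorem~\ref{InfiniteExpAorB} applies verbatim and gives $\var{A\Wr B}=\var A\cdot\var B$. Thus condition~(1) is already sufficient, and, being exactly the negation of ``both $A$ and $B$ are of finite exponent'', it leaves only the complementary case to examine.

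\emph{Case 2: $\exp A=m$ and $\exp B=n$ are both finite.} Then $\var A=\A_m$ and $\var B=\A_n$. Writing the prime factorisations $m=p_1^{u_1}\cdots p_s^{u_s}$ and $n=q_1^{k_1}\cdots q_{s'}^{k_{s'}}q_{s'+1}^{k_{s'+1}}\cdots q_d^{k_d}$ with the primes grouped so that $p_1=q_1,\ldots,p_{s'}=q_{s'}$ and $p_i\neq q_j$ otherwise, the list $q_1,\ldots,q_{s'}$ is precisely the set of primes dividing both $m$ and $n$. For such a prime $q$, by the definition of $k(B,q)$ the integer $k=k(B,q)$ is the highest power exponent with $q^k\mid\exp B=n$, and since $B[q^k]$ and $B[q^{k-1}]$ depend only on the $q$-primary part of $B$, the factor $B[q^{k(B,q)}]/B[q^{k(B,q)-1}]$ occurring in Theorem~\ref{ArbitraryFinitem,n} is literally the factor $B[p^k]/B[p^{k-1}]$ of condition~(2). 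Theorem~\ref{ArbitraryFinitem,n} then states that $\var{A\Wr B}=\A_m\cdot\A_n$ if and only if all of these factors are infinite, which is exactly condition~(2). Combining the two cases yields the asserted equivalence.

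The only point that wants a little care --- the ``main obstacle'', such as it is --- is the translation just described: that the quantifier ``for each prime $p$ dividing both $m$ and $n$'' ranges over exactly $\{q_1,\ldots,q_{s'}\}$, that ``$p^k$ is the highest power of $p$ dividing $n$'' is the same as $k=k(B,p)$, and that the corresponding factor group is unaffected by passing to the $p$-primary component. All of this is immediate from the definitions of $\exp$, of $k(B,p)$, and of primary decomposition, but it is the one place where a mismatch of notation could creep in. No further mathematical content is needed here, since the substantive arguments --- the discrimination results of Section~\ref{The_cases_of_non-finite_exponent}, the Liebeck nilpotency-class estimate bounding $\lambda(A,B,t)$, and the Clifford-theoretic analysis of critical groups in Section~\ref{groups_of_finite_composite_exponents} --- have already been carried out.
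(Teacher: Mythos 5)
Your proposal is correct and matches the paper's own treatment: the paper presents Theorem~\ref{MainCriterion} explicitly as the union of the constituent Theorems~\ref{InfiniteExpAorB} and~\ref{ArbitraryFinitem,n} (with~\ref{FinitelyGenerated} and~\ref{p-groups} as special cases), which is exactly your case split plus the notational bookkeeping. Nothing further is needed.
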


As the mentioned Theorems~\ref{InfiniteExpAorB}, \ref{FinitelyGenerated}, \ref{p-groups} 
and ~\ref{ArbitraryFinitem,n} show, this criterion is 
{\it effective} in the sense that in concrete cases the factors
$B[p^k]/B[p^{k-1}]\cong
\sum_{i\in I}C_{p}$ (if need of their consideration arises) have simple and
\lq\lq understandable\rq\rq\  meaning: we have to consider them only in the case when $A$ and $B$ are of finite exponents and there is a prime $p$ dividing that exponents; in this circumstances the condition $|B[p^k]/B[p^{k-1}]|=\infty$ simply means that in the direct decomposition $G_p=C_{p^{k_1}}\oplus C_{p^{k_2}}\oplus \cdots $ (where $k_1 \ge k_2 \ge \cdots$) of the $p$-primary component $B_p$ of the group $B$ infinitely many cycles (direct summands) $C_{p^{k_1}}=C_{p^{k(B,p)}}$ are present.

An immediate consequence of Theorem~\ref{MainCriterion} is the following:

\begin{corollary}
If for abelian groups $A$ and $B$ the wreath product $A\Wr B$ does not generate the product variety  $\var A \cdot \var B$, then the wreath product $A\Wr \sum_{i=1}^l B$ also does not generate $\var A \cdot \var B$ for any positive integer $l \in \N$.
\end{corollary}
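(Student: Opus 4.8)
The plan is to read this off directly from the Main Criterion (Theorem~\ref{MainCriterion}), used in contrapositive form. Set $B' = \sum_{i=1}^{l} B$. First I would record the routine reductions: since $B'$ is a finite direct sum of copies of $B$ we have $B' \in \var B$, and since $B \le B'$ we have $B \in \var{B'}$; hence $\var{B'} = \var B$ and therefore $\var A \cdot \var{B'} = \var A \cdot \var B$. Moreover $\exp B' = \exp B$, so $B'$ has finite exponent exactly when $B$ does. Thus it suffices to show that if the pair $(A,B)$ fails the condition of Theorem~\ref{MainCriterion}, then so does the pair $(A, B')$.

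By hypothesis $A \Wr B$ does not generate $\var A \cdot \var B$, so by Theorem~\ref{MainCriterion} both $A$ and $B$ have finite exponents, say $\exp A = m$ and $\exp B = n$, and there is a prime $p$ dividing both $m$ and $n$ for which $\abs{B[p^k]/B[p^{k-1}]} < \infty$, where $p^k$ is the highest power of $p$ dividing $n$. Then $A$ and $B'$ are again of finite exponents $m$ and $n$, so condition (1) of Theorem~\ref{MainCriterion} fails for $(A,B')$; and $p$ still divides both $m = \exp A$ and $n = \exp B'$, with $p^k$ still the highest power of $p$ dividing $\exp B'$.

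The one thing to check is how the subgroups $B[s]$ behave under finite direct sums. Writing elements of $B' = \bigoplus_{i=1}^{l} B$ coordinatewise, one sees at once that the $s$-torsion subgroup satisfies $B'[s] = \bigoplus_{i=1}^{l} B[s]$ for every $s$, whence $B'[p^k]/B'[p^{k-1}] \cong \bigoplus_{i=1}^{l}\bigl(B[p^k]/B[p^{k-1}]\bigr)$, a finite direct sum of finite groups, hence finite. Thus condition (2) of Theorem~\ref{MainCriterion} also fails for $(A,B')$, and the theorem gives that $A \Wr B'$ does not generate $\var A \cdot \var{B'} = \var A \cdot \var B$. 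The assertion for the direct wreath product $A \Wrr B'$ is the same statement, since $A \Wrr B'$ generates a common variety with $A \Wr B'$.

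Since the whole argument is immediate once the criterion is in hand, there is no real obstacle; the only point needing (routine) care is the identity $B'[s] = \bigoplus_{i=1}^{l} B[s]$, together with the observation that passing to finitely many copies of $B$ changes neither $\var B$ nor the finiteness of the factor $B[p^k]/B[p^{k-1}]$. I note in passing that the elementary inclusion $A\Wr B \le A\Wr B'$, valid by Lemma~\ref{karevor} because $B \le B'$, goes the \emph{wrong} way — it yields only $\var{A\Wr B} \subseteq \var{A\Wr B'}$ — so the corollary genuinely depends on the full Main Criterion rather than on an embedding of wreath products.
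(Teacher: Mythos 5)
Your proof is correct and follows exactly the route the paper intends: the corollary is stated there as an immediate consequence of Theorem~\ref{MainCriterion}, and you have simply filled in the (routine) verifications that $\exp\bigl(\sum_{i=1}^{l}B\bigr)=\exp B$, that $\bigl(\sum_{i=1}^{l}B\bigr)[s]=\sum_{i=1}^{l}B[s]$, and hence that the relevant factor remains finite. Your closing remark that the embedding $A\Wr B\le A\Wr\sum_{i=1}^{l}B$ gives only the wrong inclusion of varieties is a correct and worthwhile observation.
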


\subsection{Parallels for nilpotent groups of class $2$ and metabelian groups. Problems}

The following two examples show, that Theorem~\ref{MainCriterion} does {\it not}\, have obvious
 generalizations even for cases of ``small'' groups of classes of groups ``near'' to abelian groups.

\begin{example}
Let $\V=\Ni_2 \cap \B_3$ be the variety of all nilpotent groups of class at most $2$ and 
exponent dividing $3$. Then for no one group $A$ generating $\V$ and cycle $B=C_2$ $\var{A \Wr B}=\V \cdot \A_2$. On the other hand $\exp A =3$ is coprime with $\exp B =2$.
According to Lemma~\ref{karevor}, it is sufficient to prove this for one group $A$ generating 
$\V$. Let
$
        A=F_2(\V) =\langle x_1,x_2 \,|\,        [x_1,x_2,x_1]=[x_1,x_2,x_2]=x_1^3=x_2^3=1 
\rangle
$
be the $\V$-free group of rank $2$ and let $R$ be the extension of $A$ by means of the group of 
operators generated by automorphisms $\nu_1,\nu_2\in \Aut{A}$ defined as:
$
        \nu_1 :x_1\mapsto x_1^{-1} ,\quad \nu_1 :x_2\mapsto x_2 ;\quad
        \nu_2 :x_1\mapsto x_1 ,\quad \nu_2 :x_2\mapsto x_2^{-1}.
$
Clearly $\langle \nu_1,\nu_2 \rangle \cong C_2 \oplus C_2 \in \A_2$. As it is shown 
in~\cite{BurnesDissertation}, $R$ is a critical group. Every one of its proper factors, but not
$R$ itself, satisfies
$
        [[x_1,x_2],[x_3,x_4],x_5]\equiv 1.
$
On the other hand the wreath product $A \Wr B$ satisfies this identity because its second commutator
 subgroup lies in the center.
\end{example}

\begin{example}
Let $A$ and $B$ be arbitrary finite groups generating varieties $\A_p \cdot \A_q$ and $\A_r$ 
respectively, where $p,q,r$ are arbitrary different primes. Then $\var{A \Wr B}\not=(\A_p \cdot
\A_q ) \cdot \A_r$, in spite of the fact that $\exp A =p\,q$ is coprime with $\exp B =r$.
For, the product of three non-trivial varieties $(\A_p \cdot \A_q ) \cdot \A_r=\A_p \cdot \A_q  
\cdot \A_r$ cannot, by theorem of \v Smelkin~\cite{ShmelkinOnCrossVarieties},  be generated by
a single finite group $A \Wr B$.
\end{example}

This examples, and the results of Section~\ref{QSC} proved not only for the wreath products of single groups but also for sets $\X \Wr\hskip0.1mm \Y$ set the following problems of generalization of our main criterion (Theorem~\ref{MainCriterion}) in two possible directions:

\begin{problem}
Let $A$ and $B$ be arbitrary (nilpotent, metabelian, soluble) groups. Find a criterion under which  $\var{A \Wr B} = \var A \cdot \var B$.
\end{problem}

\begin{problem}
Let $\X$ and $\Y$ be arbitrary sets of (abelian) groups. Find a criterion under which $\var{\X \Wr \Y} = \var{\X} \cdot \var{\Y}$.
\end{problem}

\section{Wreath products and finite direct sums of abelian groups}
\label{last}

As we saw in Section~\ref{QSC}, $\var{A \Wr B}=\var{A} \cdot \var{B}$ if and only if $\var{A 
\Wr B}$ contains wreath products $A\Wr (\prod_{i\in I}B)$ for every index set $I$. It is a result
of independent interest, that here instead of arbitrary set $I$ we can take a two-element set,
that is:

\begin{theorem}
\label{AnEvenStrongerResult}
For arbitrary abelian groups $A$ and $B$ equality
$
        \var{A \Wr B} = \var A \cdot \var B
$
takes place if and only the variety $\var{A \Wr B}$ contains the group $A \Wr (B \times B)$. 
The analogous statement is also true for direct wreath products of groups.
\end{theorem}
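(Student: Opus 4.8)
The ``only if'' direction is the easy one: if $\var{A\Wr B}=\var A\cdot\var B$, then $A\Wr(B\times B)$, being an extension of the cartesian power $A^{B\times B}\in\var A$ by the group $B\times B\in\var B$, automatically lies in the product variety $\var A\cdot\var B=\var{A\Wr B}$ (this is also the Corollary following Lemma~\ref{karevor}, applied with a two-element index set). The content is in the converse, which I would prove by contraposition: assuming $\var{A\Wr B}\ne\var A\cdot\var B$, I would produce a finitely generated group that lies in $\var{A\Wr(B\times B)}$ but not in $\var{A\Wr B}$.

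By the Main Criterion (Theorem~\ref{MainCriterion}), the failure $\var{A\Wr B}\ne\var A\cdot\var B$ forces $A$ and $B$ to have finite exponents $m$ and $n$ and forces the existence of a prime $p\mid\gcd(m,n)$ for which $B[p^k]/B[p^{k-1}]$ is finite, where $p^k$ is the exact power of $p$ dividing $n$; write $u=k(A,p)$ and let $p^{\mu}$ be the (finite, nontrivial) order of $B[p^k]/B[p^{k-1}]$, so $1\le\mu<\infty$. By Pr\"ufer's theorem the $p$-primary component $B_p$ has exactly $\mu$ cyclic summands of order $p^k$, hence $B_p\times B_p\le B\times B$ has $2\mu$ of them and so contains $\bigoplus_{i=1}^{2\mu}C_{p^k}$; also $A_p$ contains $C_{p^u}$ since $\exp A_p=p^u$. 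Two applications of Lemma~\ref{karevor} then show that the direct wreath product $T(p,2\mu+1)=C_{p^u}\Wrr\bigoplus_{i=1}^{2\mu}C_{p^k}$ belongs to $\var{A\Wr(B\times B)}$.

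It remains to check that $T(p,2\mu+1)\notin\var{A\Wr B}$, which I would do by comparing nilpotency classes on $2\mu+1$ generators. Since $T(p,2\mu+1)$ is an abelian-by-abelian $p$-group lying in $\A_{p^u}\cdot\A_{p^k}$, Lemma~\ref{TechnivalResult} reduces the question to whether it lies in $\var{A_p\Wr B_p}$. By the result of Liebeck its class equals $\nu(p,2\mu+1)=2\mu(p^k-1)+(u-1)(p-1)p^{k-1}+1$, whereas every $(2\mu+1)$-generated group of $\var{A_p\Wr B_p}$ has class at most $\lambda(A_p,B_p,2\mu+1)$, which by the bound~(\ref{ValueForLambda}) is at most $\mu(p^k-1)+(\mu+1)(p^{k-1}-1)+(u-1)(p-1)p^{k-1}+1$, because among the first $2\mu+1$ cyclic summands of $B_p$ at most $\mu$ have order $p^k$ and the remaining ones have order at most $p^{k-1}$. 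Subtracting, $\nu(p,2\mu+1)-\lambda(A_p,B_p,2\mu+1)\ge\mu p^{k-1}(p-1)-(p^{k-1}-1)>0$ since $\mu\ge1$ and $p\ge2$, so $T(p,2\mu+1)\notin\var{A_p\Wr B_p}$, hence $T(p,2\mu+1)\notin\var{A\Wr B}$, while $T(p,2\mu+1)\in\var{A\Wr(B\times B)}$. Therefore $A\Wr(B\times B)\notin\var{A\Wr B}$, which is the contrapositive.

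The delicate point is that passing from $B$ to $B\times B$ only doubles the number $\mu$ of top-order cyclic summands of the active group to $2\mu$, yet this modest gain already suffices: on $t=2\mu+1$ generators the class $\nu(p,t)$ of the natural test group $T(p,t)$ strictly exceeds the a priori ceiling $\lambda(A_p,B_p,t)$ available to $\var{A\Wr B}$. Pinning this down is precisely why one first passes to abelian $p$-groups via Lemma~\ref{TechnivalResult}, where the function $\lambda$ is defined and estimated by~(\ref{ValueForLambda}); the remaining steps are bookkeeping with tools already assembled in the previous sections.
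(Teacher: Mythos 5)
Your proof is correct; I checked each step. The ``only if'' direction is immediate, $\mu\ge 1$ holds because $\exp B_p=p^k$ forces at least one summand $C_{p^k}$ in the Pr\"ufer decomposition, the test group $C_{p^u}\Wrr\bigoplus_{i=1}^{2\mu}C_{p^k}$ does lie in $\var{A\Wr (B\times B)}$ by two applications of Lemma~\ref{karevor}, the reduction to $\var{A_p\Wr B_p}$ via Lemma~\ref{TechnivalResult} is legitimate since the test group lies in $\A_{p^u}\cdot\A_{p^k}$, and the estimate $\nu(p,2\mu+1)-\lambda(A_p,B_p,2\mu+1)\ge \mu p^{k-1}(p-1)-(p^{k-1}-1)>0$ is right. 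The organization, however, differs from the paper's. The paper obtains Theorem~\ref{AnEvenStrongerResult} as a corollary of the stronger Theorem~\ref{MoreGeneralResult}, whose proof must separate $\var{A\Wr \sum_{i=1}^{s}B}$ from $\var{A\Wr \sum_{i=1}^{r}B}$ for \emph{all} $s<r$; for that it stratifies $B_p$ down to the first layer occurring infinitely often, as in decomposition~(\ref{TheConcreteFormOfB_p}), builds a multi-layer test group $T(r,t)$, and lets $t$ grow. You need only the single separation $s=1$, $r=2$, so you can stay entirely in the top layer: the number $\mu$ of $C_{p^k}$-summands doubles in $B\oplus B$, and the coarse bound~(\ref{ValueForLambda}) together with Liebeck's formula already decides the matter at the single value $t=2\mu+1$ (any $t\ge\mu+2$ realizable inside $B\times B$ would do). What your route buys is a short, self-contained proof that bypasses Theorem~\ref{MoreGeneralResult} and the finer decomposition; what it does not deliver is the strictly increasing chain of subvarieties $\var{A\Wr \sum_{i=1}^{s}B}$, $s=1,2,\ldots$, which is the additional content of the paper's version.
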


This theorem follows from a more general result:

\begin{theorem}
\label{MoreGeneralResult}
For arbitrary abelian groups $A$ and $B$ one and only one of the following two alternatives holds:
\begin{enumerate}
\item the variety $\var{A \Wr B}$ is equal to the variety $\var A \cdot \var B$ and, thus, to 
every subvariety $\var{A\Wr (\prod_{i\in I}B)}$ of the latter for any index set $I$;
\item the variety $\var{A \Wr B}$ is a proper subvariety of variety $\var A \cdot \var B$. Then 
for any positive integer $s\in \N$ the variety $\var{A \Wr (\prod_{i=1}^s B)}$ is a proper
subvariety of variety $\var A \cdot \var B$ and, moreover, of the variety $\var{A \Wr
(\prod_{i=1}^u B)}$ for any integer $u>s$. On the other hand for any infinite index set $I$: $\var{A\Wr (\prod_{i\in I}B)}=\var A \cdot \var B$.
\end{enumerate}

The first alternative holds for any abelian $A$ and $B$ apart from the following 
case: $A$ and $B$ are of finite exponents $m$ and $n$ respectively and for some prime $p$
dividing both $m$ and $n$ the corresponding factor $B[p^k]/B[p^{k-1}]$ is finite.

The analogous statement is also true for direct wreath products of groups.
\end{theorem}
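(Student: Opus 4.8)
The plan is to read the whole statement off the Main Criterion (Theorem~\ref{MainCriterion}) together with Lemma~\ref{TechnivalResult} and the nilpotency-class estimates behind Lemma~\ref{nu>mu}. First, the two alternatives are exclusive and exhaustive for the trivial reason that $\var{A\Wr B}$ is either equal to $\var{A}\cdot\var{B}$ or a proper subvariety of it; and by Theorem~\ref{MainCriterion} the equality fails precisely in the exceptional case described at the end of the statement ($A$ and $B$ of finite exponents $m,n$ with $B[p^k]/B[p^{k-1}]$ finite for some prime $p$ dividing both $m$ and $n$). In the first alternative the equality propagates to every $\var{A\Wr(\prod_{i\in I}B)}$ by a two-line sandwich: the diagonal embedding $B\le\prod_{i\in I}B$ together with Lemma~\ref{karevor} gives $A\Wr B\le A\Wr(\prod_{i\in I}B)$, whence $\var{A}\cdot\var{B}=\var{A\Wr B}\subseteq\var{A\Wr(\prod_{i\in I}B)}$; conversely $A\Wr(\prod_{i\in I}B)$ is an extension of a cartesian power of $A$ by $\prod_{i\in I}B$ and $\var{\prod_{i\in I}B}=\var{B}$, so $\var{A\Wr(\prod_{i\in I}B)}\subseteq\var{A}\cdot\var{B}$, and equality holds throughout.

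Second, still under the exceptional hypothesis of the second alternative, I would treat an infinite index set $I$. The point is that $B':=\prod_{i\in I}B$ again satisfies the hypothesis of the Main Criterion relative to $A$: it has exponent $n$, the primes dividing both $\exp A$ and $\exp B'$ are unchanged, and for each such prime $q$, writing $q^{k_q}$ for the highest power of $q$ dividing $n$, one has $B'[q^{k_q}]/B'[q^{k_q-1}]\cong\prod_{i\in I}\bigl(B[q^{k_q}]/B[q^{k_q-1}]\bigr)$, a cartesian product over an infinite set of non-trivial groups (non-trivial because $q\mid n$ forces $\exp B_q=q^{k_q}$), hence infinite. Therefore $\var{A\Wr(\prod_{i\in I}B)}=\var{A}\cdot\var{B'}=\var{A}\cdot\var{B}$, which is the last clause of the second alternative.

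The substantive part is the strictly increasing chain for finite $s$ in the second alternative. Fix a prime $p$ as in the exceptional case, put $p^u=\exp A_p$, $p^k=\exp B_p$, and $\abs{B[p^k]/B[p^{k-1}]}=p^\mu$ with $1\le\mu<\infty$; for $s\in\N$ write $B^{(s)}=\prod_{i=1}^s B$, whose $p$-primary component is $\prod_{i=1}^s B_p$, so that $\abs{B^{(s)}[p^k]/B^{(s)}[p^{k-1}]}=p^{s\mu}$ while $k(B^{(s)},p)=k$ and $k(A,p)=u$. Applying the Main Criterion to the pair $(A,B^{(s)})$ (exactly as in the previous paragraph, but now with a finite factor at $p$) shows at once that $\var{A\Wr B^{(s)}}$ is a proper subvariety of $\var{A}\cdot\var{B}$ for every $s$. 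To obtain the strict chain I would use, for $u>s$, the $(s\mu+2)$-generated test group $T(p,s\mu+2)=C_{p^u}\Wrr\sum_{i=1}^{s\mu+1}C_{p^k}$, of nilpotency class $\nu(p,s\mu+2)$. On the one hand $(u-s)\mu\ge1$ gives $s\mu+1\le u\mu$, so $T(p,s\mu+2)$ embeds into $A_p\Wr(\prod_{i=1}^u B_p)$ (using $C_{p^u}\le A_p$ and the fact that $\prod_{i=1}^u B_p$ has $u\mu$ cyclic summands of order $p^k$), whence $T(p,s\mu+2)\in\var{A_p\Wr(\prod_{i=1}^u B_p)}\subseteq\var{A\Wr B^{(u)}}$ by Lemma~\ref{TechnivalResult}. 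On the other hand $T(p,s\mu+2)\in\A_{p^u}\cdot\A_{p^k}$, and by Lemma~\ref{TechnivalResult} $\var{A\Wr B^{(s)}}\cap\A_{p^u}\cdot\A_{p^k}=\var{A_p\Wr(\prod_{i=1}^s B_p)}$; since $s\mu+2>(p^{k-1}-1)/(p^k-p^{k-1})+s\mu+1$ because $(p^{k-1}-1)/(p^k-p^{k-1})<1$, the estimate displayed in the proof of Lemma~\ref{nu>mu}, with $s\mu$ playing the role of $\mu$, yields $\nu(p,s\mu+2)>\lambda(A_p,\prod_{i=1}^s B_p,s\mu+2)$, so $T(p,s\mu+2)\notin\var{A_p\Wr(\prod_{i=1}^s B_p)}$ and therefore $T(p,s\mu+2)\notin\var{A\Wr B^{(s)}}$. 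Together with $\var{A\Wr B^{(s)}}\subseteq\var{A\Wr B^{(u)}}$ (Lemma~\ref{karevor}, as $B^{(s)}\le B^{(u)}$) this shows $\var{A\Wr B^{(s)}}$ is a proper subvariety of $\var{A\Wr B^{(u)}}$.

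The delicate point --- and the technical heart of the argument --- is exactly the choice of the exponent $s\mu+2$ of the test group: it must be large enough that $\nu(p,\cdot)$ has overtaken the bound $\lambda(A_p,\prod_{i=1}^s B_p,\cdot)$, i.e.\ exceed $(p^{k-1}-1)/(p^k-p^{k-1})+s\mu+1$, yet small enough that $T(p,\cdot)$ still embeds into $A_p\Wr(\prod_{i=1}^u B_p)$, i.e.\ not exceed $u\mu+1$; this window is non-empty precisely because $(p^{k-1}-1)/(p^k-p^{k-1})<1$ while $1\le(u-s)\mu$. Finally, $\var{H\Wr G}=\var{H\Wrr G}$ for all groups and every test group above is already built with $\Wrr$, so the argument gives the statement for direct wreath products verbatim; and specialising the chain to $s=1$, $u=2$ recovers Theorem~\ref{AnEvenStrongerResult}.
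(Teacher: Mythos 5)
Your proof is correct, and while its skeleton matches the paper's (dispose of the easy cases, reduce to the $p$-primary components via Lemma~\ref{TechnivalResult}, then compare the nilpotency class of an explicit test group against the bound $\lambda$ from Subsection~\ref{An_upper_bound_for_the_function}), the execution of the decisive step is genuinely different and noticeably simpler. The paper, needing a test group inside $\var{A_p\Wr B_{r,p}}$ whose class beats $\lambda(A_p,B_{s,p},t)$, first rewrites $B_p$ in the concrete form~(\ref{TheConcreteFormOfB_p}) --- locating the first depth $d$ at which the layer $B[p^{k-d}]/B[p^{k-d-1}]$ becomes infinite --- and builds a layered test group $T(r,t)$ mirroring that whole finite head of the decomposition, with $t$ taken large. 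You avoid that structural analysis entirely: you reuse the one-layer test group $T(p,t)=C_{p^{u}}\Wrr\sum C_{p^k}$ of Section~\ref{The_case_of_abelian_$p$-groups} at the single, calibrated value $t=s\mu+2$, and the whole argument reduces to the observation that the window between $(p^{k-1}-1)/(p^k-p^{k-1})+s\mu+1$ (above which $\nu$ overtakes $\lambda$) and $u\mu+1$ (below which the test group still embeds) is non-empty because the first quantity's fractional part is less than $1$ while $(u-s)\mu\ge 1$. That is a cleaner proof of the strict chain; what the paper's heavier construction buys is a family of separating groups for \emph{every} large $t$, not just one. Your other deviations are harmless: you get the ``infinite $I$'' clause and the properness of each $\var{A\Wr\sum_{i=1}^s B}$ by applying Theorem~\ref{MainCriterion} to the pair $(A,\prod_I B)$ (computing $\bigl(\prod_I B\bigr)[p^k]/\bigl(\prod_I B\bigr)[p^{k-1}]\cong\prod_I\bigl(B[p^k]/B[p^{k-1}]\bigr)$), where the paper argues via discrimination; both are legitimate since Theorem~\ref{MainCriterion} is already established. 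Two cosmetic points: you use $u$ both for $\exp A_p=p^u$ and for the larger index $u>s$, which collides in the expression ``$C_{p^u}\le A_p$ and $\prod_{i=1}^u B_p$ has $u\mu$ summands''; rename one of them. And when $\prod_{i=1}^s B_p$ has fewer than $s\mu+2$ cyclic summands (e.g.\ $B_p$ finite of rank $\mu$), the sum $\sum_{i=1}^{t}(p^{k_i}-1)$ in the $\lambda$-bound should be read as truncated --- the inequality only improves, but say so.
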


\begin{remark}
Applying this theorem, having any wreath product $A \Wr B$ which does not generate the variety 
$\var A \cdot \var B$, we get a countably infinite set of linearly ordered proper subvarieties
of  $\var A \cdot \var B$, namely, varieties generated by wreath products:
$
        \{A \Wr (\underbrace{B\oplus \cdots \oplus B}_{\text{$s$ times}}) \,|   \, s\in \N \}
$
(see examples below).
\end{remark}

\begin{proof}[Proof of Theorem~\ref{MoreGeneralResult}]
First let us consider the cases, when the statement of this theorem easily follows from one of the
results already established. If one of the groups $A$ and $B$ is not of finite exponent, then the first alternative holds. Thus assume $\exp A =m$, $\exp B =n$. If the index set $I$ is infinite,
then for any $B$ the direct sum $\sum_{i \in I} B$ always discriminates $\var B$ and, thus,
${A\Wr (\sum_{i \in I} B)}$ discriminates $\var{A} \cdot \var{B}$. So assume $I$ to be finite.
If
$
        {\rm var}({A\Wr \sum_{i \in I} B})
        \not=\var{A} \cdot \var{B},
$
then by Theorem~\ref{MainCriterion} there is a prime $p$ such that the corresponding factor 
group $B[p^k]/B[p^{k-1}]$ is finite. Denote for brevity by $B_{s,p}$ the $p$-primary component
of direct sum $\sum_{i=1}^s B$. If for given $r>s$  $\var{A \Wr \sum_{i=1}^s B}=\var{A \Wr
\sum_{i=1}^r B}$ holds, then following the proof of Lemma~\ref{TechnivalResult}:
\begin{equation}
\label{A_pWrB_p}
        \var{A_p \Wr B_{s,p}}=\var{A_p \Wr B_{r,p}}.
\end{equation}
So to complete the proof it is sufficient to show that ~(\ref{A_pWrB_p}) leads to a 
contradiction to the fact that $B[p^k]/B[p^{k-1}]$ is finite. We can also omit the case when
the group $B_{s,p}$ (and, therefore, the group $B_{r,p}$) is finite, for in such a case by
Liebeck's  Theorem~\cite{Liebeck_Nilpotency_classes} the groups $A_p \Wr B_{s,p}$ and $A_p \Wr
B_{r,p}$ have different nilpotency classes.

For the rest of proof the concrete form of $B_p$ is essential. Since $B[p^k]/B[p^{k-1}]$ is 
finite, $B_p$ contains in its direct decomposition only {\it finitely many}, say $l_0$, 
summands $C_{p^k}$. It may turn out that $B[p^{k-1}]/B[p^{k-2}]$ is also finite and the number
of summands $C_{p^{k-1}}$ is finite, say $l_1$. But since the group $B_p$ is infinite, there
exists the {\it first} number $d$ such that the corresponding factor $B[p^{k-d+1}]/B[p^{k-d}]$
is finite, it consists of, say, $l_{d-1}$ summands $C_{p^{k-d+1}}$, and the factor
$B[p^{k-d}]/B[p^{k-d-1}]$ is {\it infinite}. Thus $B_p$ can be presented as:
\begin{equation}
\label{TheConcreteFormOfB_p}
\begin{split}
        B_p & = \underbrace{C_{p^k} \oplus \cdots  \oplus  C_{p^k}}_{l_0}
        \oplus \cdots  \oplus
        \underbrace{C_{p^{k-d+1}} \oplus \cdots  \oplus
        C_{p^{k-d+1}}}_{l_{d-1}} \\
        & \oplus \underbrace{C_{p^{k-d}} \oplus \cdots  \oplus
        C_{p^{k-d}}\oplus \cdots }_{\infty} \,\, \oplus  \,\, \hat{B},
\end{split}
\end{equation}
where $\exp{\hat{B}}\le p^{k-d-1}$ and where some of $l_1,\ldots,l_{d-1}$ may be equal to $0$. 
Let $\lambda = \lambda (A_p,B_{s,p},t)$ be the function defined in
Section~\ref{The_case_of_abelian_$p$-groups} and let $\exp{A_p}=p^u$. It follows from the
proof in Subsection~\ref{An_upper_bound_for_the_function} and from
decomposition~(\ref{TheConcreteFormOfB_p}) that, for any $t>s\cdot \sum_{i=0}^{d-1}l_i$ the function
$\lambda (A_p,B_{s,p},t)$ is bounded by
\begin{equation}
                s \cdot \sum_{i=0}^{d-1} l_i(p^{k-i}-1) +
                (t- s\cdot \sum_{i=0}^{d-1}l_i) \cdot(p^{k-d}-1) +
                (u-1)(p-1)p^{k-1} +1.
\end{equation}
On the other hand for each $t>r\cdot \sum_{i=0}^{d-1} l_i +1$ the variety $\var{A_p 
\Wr B_{r,p}}$  contains the following $t$-generated group:
$$
        T(r,t) = C_{p^u}\Wr \left[ \sum_{i=1}^{rl_0}C_{p^k}
        \oplus \cdots  \oplus  \sum_{i=1}^{rl_{d-1}}C_{p^{k-d+1}}
        \oplus  \sum_{i=1}^{\omega(t)}C_{p^{k-d}},
        \right]
$$
where
$\omega(t) = t- r \cdot \sum_{i=0}^{d-1} l_i -1$. The group $T(r,t)$ is nilpotent of class:
\begin{equation*}
\begin{split}
        \nu(p,r,t)= &
        r \cdot \sum_{i=0}^{d-1} l_i(p^{k-i}-1) +
                (t- r\cdot \sum_{i=0}^{d-1}l_i\,-1) \cdot(p^{k-d}-1)\\ +
        &       (u-1)(p-1)p^{k-1} +1.
\end{split}
\end{equation*}
It remains to verify that for sufficiently large integers $t$ the value of $\nu(p,r,t)$ is 
greater than that of $\lambda (A_p,B_{s,p},t)$. It is sufficient to make calculations for the
value $r=s+1$: the generality of the statement of Theorem~\ref{MoreGeneralResult} remains
unaffected but our calculations become much shorter.
\begin{equation*}
\begin{split}
        \nu(p,s+1,t) - \lambda (A_p,B_{s,p},t) & =
        \sum_{i=0}^{d-1} l_i(p^{k-i}-1) \\
        & + \left(t- (s+1)\cdot \sum_{i=0}^{d-1}l_i\,-1 -t + s\cdot
        \sum_{i=0}^{d-1}l_i \right) \cdot(p^{k-d}-1 )\\
        &= \sum_{i=0}^{d-1}l_i p^{k-i} - \sum_{i=0}^{d-1}l_i -
        \left(\sum_{i=0}^{d-1}l_i +1 \right)\cdot(p^{k-d}-1 )\\
        &= \sum_{i=0}^{d-1}l_i(p^{k-i}-p^{k-d})\,-p^{k-d} +1>0.
\end{split}
\end{equation*}
\vskip-3.5mm
\end{proof}
Here are two examples concerning subvarieties generated by wreath products in the lattice of 
all subvarieties of product varieties of abelian groups.

\begin{example}
Since $C_p \Wr C_p$ does not generate variety $\A_p^2$, wreath products $C_p \Wr \sum_{i=1}^s 
C_p$ \,\, ($s=1,2,\ldots$) \, generate infinitely many subvarieties of $\A_p^2$. We are able to
locate them in the lattice of subvarieties of $\A_p^2$ using its description due to Kov\'acs
and Newman~\cite{KovacsAndNewmanOnNon-Cross}. For any proper subvariety $\V$ of $\A_p^2$ there
is a number $s\ge 1$ such that
$
        {\rm var}\,({C_p \Wr \sum_{i=1}^{s} C_p}) \subseteq \V
        \subseteq  {\rm var}\,({C_p \Wr \sum_{i=1}^{s+1} C_p})
$
or $\V$ lies in $\var{C_p \Wr C_p}$. Moreover, for any $s\ge 1$  there are exactly the following 
$p-2$ subvarieties of $\A_p^2$ ``between'' $\var{C_p \Wr \sum_{i=1}^s C_p}$ and $\var{C_p \Wr
\sum_{i=1}^{s+1} C_p}$:
$
        \A_p^2 \cap \B_{p^2} \cap \, \Ni_{j},
        \, j=s(p-1)+2,\ldots, (s+1)(p-1).
$
And there are $2p-3$ subvarieties of $\A_p^2$  ``between'' $\var{C_p \Wr \1}=\A_p$ and 
$\var{C_p \Wr C_p}$ (see~\cite{KovacsAndNewmanOnNon-Cross}).
\end{example}

\begin{example}
On the other hand for arbitrary coprime numbers $m$ and $n$ the variety $\A_m\cdot \A_n$ 
contains only finitely many subvarieties~\cite{HannaNeumann}. According to
Theorem~\ref{MoreGeneralResult} this fact already guarantees, that for an arbitrary pair $A\in
\A_m$,  $B\in \A_n$: $\var{A\Wr B}=\var{A} \cdot \var{B}$.
\end{example}

Closing the current work we would like to announce our recent papers~\cite{SubnormalEmbeddingTheorems,Uber_die_normalen_verbalen_Einbettungen,SubnormalEmbedOfOrderedGr} as well as our common paper with Professor H.~Heineken~\cite{HeinekenMikaelianOnnVEmb}, where some related properties of wreath products and their verbal subgroups are considered.

\bibliographystyle{amsalpha}

\end{document}